\NeedsTeXFormat{LaTeX2e}

\documentclass{amsart}
\usepackage[T1]{fontenc}
\usepackage{latexsym,amssymb,amsmath}

\usepackage{amsthm}
\usepackage{longtable}
\usepackage{epsfig}
\usepackage{hhline}
\usepackage{epic}

 \newcommand{\Galg}{\mathbf{G}}
 
 \newcommand{\Balg}{\mathbf{B}}

 \newcommand{\Ualg}{\mathbf{U}}
 \newcommand{\Palg}{\mathbf{P}}
 
 \newcommand{\Lalg}{\mathbf{L}}
 \newcommand{\Talg}{\mathbf{T}}
 
 \newcommand{\Xalg}{\mathbf{X}}

 \newcommand{\Ind}{\operatorname{Ind}}
 \newcommand{\Res}{\operatorname{Res}}
 \newcommand{\Nn}{\operatorname{N}}

 \newcommand{\Cen}{\operatorname{C}}

 \newcommand{\cyc}[1]{\langle\,#1\,\rangle}

 \newcommand{\C}{\mathbb{C}}
 \newcommand{\F}{\mathbb{F}}

 \newcommand{\Z}{\mathbb{Z}}

 \newcommand{\Irr}{\operatorname{Irr}}

 \newcommand{\Zz}{\operatorname{Z}}

\newcommand{\cal}[1]{\mathcal{#1}}

\newtheorem{theorem}{Theorem}[section] 
\newtheorem{lemma}[theorem]{Lemma}     
\newtheorem{corollary}[theorem]{Corollary}
\newtheorem{proposition}[theorem]{Proposition}

\theoremstyle{definition}
\newtheorem{remark}[theorem]{Remark}

\title[Counting $p'$-characters in finite reductive groups]
   {Counting $p'$-characters in finite reductive groups}
\author{Olivier Brunat}

\address{Ruhr-Universit\"at Bochum\\
Fakult\"at f\"ur Mathematik\\
Raum NA 2/33\\
D-44780 Bochum\\}
\email{Olivier.Brunat@ruhr-uni-bochum.de}

\subjclass{20C15,\ 20C33}

\begin{document}

\begin{abstract} 
This article is concerned with the relative McKay conjecture for
finite reductive groups. Let $\Galg$ be a connected reductive group
defined over the finite field $\F_q$ of characteristic $p>0$ with
corresponding Frobenius map $F$. We prove that if the $F$-coinvariants
of the component group of the center of $\Galg$ has prime order and if
$p$ is a good prime for $\Galg$, then
the relative McKay conjecture holds for $\Galg^F$ at the prime $p$.
In particular, this conjecture is true for $\Galg^F$ in defining
characteristic for a simple and simply-connected
group $\Galg$ of type $B_n$, $C_n$, $E_6$ and $E_7$.
Our main tools are the theory of Gelfand-Graev characters for connected
reductive groups with disconnected center developed by
Digne-Lehrer-Michel and the theory of cuspidal Levi subgroups.
We also explicitly compute the number of
semisimple classes of $\Galg^F$ for any simple algebraic group $\Galg$.
\end{abstract}

\maketitle

\section{Introduction}\label{intro}
Let $G$ be a finite group and $p$ be a prime divisor of $|G|$. As usually,
we denote by $\Irr(G)$ the set of irreducible characters of $G$ and
by $\Irr_{p'}(G)$ the subset of irreducible characters with degree
prime to $p$. For any fixed $p$-Sylow subgroup $P$ of $G$, John McKay
has conjectured that $|\Irr_{p'}(G)|=|\Irr_{p'}(\Nn_G(P))|$. This is
actually proved for some groups but remains open in general. Recently,
Isaacs, Malle and Navarro reduced this conjecture to a new question, the
so-called inductive McKay condition, which concerns properties of 
perfect central extensions of finite simple groups; see~\cite{IMN}.

In this article, we are interested in the relative McKay conjecture,
asserting that for every linear character $\nu$ of the center $Z$ of $G$, if
$\Irr_{p'}(G|\nu)$ denotes the subset of characters $\chi\in\Irr_{p'}(G)$ lying
over $\nu$ (\emph{i.e.} satisfying $\cyc{\chi,\Ind_Z^G(\nu)}_G\neq 0$), then
one has the equality $|\Irr_{p'}(G|\nu)|=|\Irr_{p'}(\Nn_G(P)|\nu)|$. In order
to prove the inductive McKay condition, we in particular have to show that the
relative McKay conjecture holds for some perfect central extensions of finite
simple groups. This is one of the motivations to consider this question in this
work.

Let $\Galg$ be a connected reductive group defined over a finite field with
$q$ elements $\F_q$ of characteristic $p>0$ with corresponding Frobenius
map $F:\Galg\rightarrow\Galg$. Throughout this paper, we will always assume
that $p$ is a good prime for $\Galg$, that is $p$ does not divide the
coefficients of the highest root of the root system associated to $\Galg$
(see~\cite[1.14]{Carter2}). Let $\Talg$ be a maximal $F$-stable torus of $\Galg$
contained in an $F$-stable Borel subgroup $\Balg$ of $\Galg$ and let $\Ualg$
denote the unipotent radical of $\Balg$ (which is $F$-stable). Note that, if
$\Ualg$ is not trivial, then the prime $p$ divides the order of the finite
fixed-point subgroup $\Galg^F$ and the subgroup $\Ualg^F\subseteq\Galg^F$ is a
$p$-Sylow subgroup of $\Galg^F$.
Moreover, one has $\Nn_{\Galg^F}(\Ualg^F)=\Balg^F$.
If the center of $\Galg$ is connected, then the
McKay conjecture is true for the group $\Galg^F$ at the prime $p$. 
We will see in the following that the relative McKay conjecture holds in
this case (see Proposition~\ref{centreconnexerelative}).
This question is more difficult when the center of $\Galg$ is
disconnected. In this article, we will solve it in a special situation. 
Denote by
$\cal Z(\Galg)=\Zz(\Galg)/\Zz(\Galg)^{\circ}$ the group of components of
the center of $\Galg$ and by $H^1(F,\cal Z(\Galg))$ the set of the
$F$-classes of $\cal Z(\Galg)$.
Then our main result is the following.

\begin{theorem}
Let $\Galg$ be a connected reductive group defined over the finite field
$\F_q$ of characteristic $p>0$ and let $F:\Galg\rightarrow\Galg$ denote the
corresponding Frobenius map. Let $\Talg$ be a maximal $F$-stable torus contained
in an $F$-stable Borel subgroup $\Balg$ of $\Galg$. If $p$ is a good prime for
$\Galg$ and if the group $H^1(F,\cal Z(\Galg))$ is trivial or has prime order,
then for every linear character $\nu$ of $\Zz(\Galg^F)$, one has
$$|\Irr_{p'}(\Galg^F|\nu)|=|\Irr_{p'}(\Balg^F|\nu)|.$$
\label{main}
\end{theorem}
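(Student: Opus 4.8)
The plan is to show that both sides compute the same count of semisimple conjugacy classes of the dual group lying in the ``fibre'' determined by $\nu$, and to reduce the disconnected-centre case to the connected-centre case already settled in Proposition~\ref{centreconnexerelative}. The starting observation is that, in good characteristic, the irreducible characters of $\Galg^F$ of degree prime to $p$ are exactly the semisimple characters: the $p$-part of the degree of a character in the Lusztig series of a semisimple element $s$ is governed by the $a$-value of the attached unipotent character of $\Cen_{\Galg^*}(s)^F$, which vanishes only for the semisimple characters. Thus $|\Irr_{p'}(\Galg^F|\nu)|$ counts semisimple characters lying over $\nu$, and by Lusztig's parametrisation the central character under which a semisimple character lies is read off from the image of its semisimple class in $\Irr(\Zz(\Galg^F))$.

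To exploit the connected-centre result I would fix a regular embedding $\Galg\hookrightarrow\tilde\Galg$, that is an $F$-equivariant closed embedding into a connected reductive group with connected centre and the same derived group, and set $\tilde\Balg=\Balg\,\Zz(\tilde\Galg)$. Then $\Ualg$ is still the unipotent radical of $\tilde\Balg$, the identity $\Nn_{\tilde\Galg^F}(\Ualg^F)=\tilde\Balg^F$ persists, and Proposition~\ref{centreconnexerelative} yields $|\Irr_{p'}(\tilde\Galg^F|\tilde\nu)|=|\Irr_{p'}(\tilde\Balg^F|\tilde\nu)|$ for every linear character $\tilde\nu$ of $\Zz(\tilde\Galg^F)$. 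The groups $\Galg^F\trianglelefteq\tilde\Galg^F$ and $\Balg^F\trianglelefteq\tilde\Balg^F$ are normal with abelian quotients controlled by $H^1(F,\cal Z(\Galg))$, so the remaining task is to descend the equality through Clifford theory, tracking both the number of irreducible constituents of a restricted character and the central characters $\nu$ of $\Zz(\Galg^F)$ that sit under a given $\tilde\nu$.

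This descent is where the two stated tools enter. On the group side $\Balg^F=\Ualg^F\rtimes\Talg^F$ with $\Talg^F$ abelian of order prime to $p$, so a $p'$-character must lie over a \emph{linear} character of $\Ualg^F$ (any higher-degree constituent of $\Ualg^F$, being of $p$-power degree, would force $p$ into the degree); the $p'$-characters are thus governed by the $\Talg^F$-orbits on the linear characters of $\Ualg^F$, a combinatorial datum on the simple roots that is transparent to compute and to restrict from $\tilde\Balg^F$. On the $\Galg^F$ side I would use the Gelfand--Graev characters of Digne--Lehrer--Michel for disconnected centre: the number of these characters is $|H^1(F,\cal Z(\Galg))|$, and their decomposition pins down exactly how a semisimple character of $\tilde\Galg^F$ splits on restriction to $\Galg^F$ together with the component-group characters of $\Cen_{\Galg^*}(s)$ attached to each piece, hence the distribution of the constituents among the central characters $\nu$. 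Cuspidal Levi subgroups would serve to reduce the enumeration of semisimple classes, and their splitting, to cuspidal data where the component-group bookkeeping is controlled.

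The main obstacle I expect is precisely the compatibility of these two distributions $\nu$ by $\nu$: the totals agree by Proposition~\ref{centreconnexerelative}, but for a general abelian $H^1(F,\cal Z(\Galg))$ the way a semisimple character of $\tilde\Galg^F$ breaks up over $\Galg^F$ need not match, central character by central character, the way the corresponding Borel characters break up. This is exactly the point at which the hypothesis that $H^1(F,\cal Z(\Galg))$ be trivial or of prime order becomes decisive: when it is trivial the descent is immediate, and when it has prime order $\ell$ the quotient acts on each of the two character sets with orbits of size $1$ or $\ell$ only, so that the refined counts over each $\nu$ are forced by the global equality together with a single comparison of fixed points. Establishing this last comparison---showing that a semisimple character of $\tilde\Galg^F$ is fixed by the relevant order-$\ell$ automorphism exactly when its Borel counterpart is, using the DLM parametrisation to identify the fixed characters with the semisimple classes whose component-group data is stabilised---is the technical heart of the argument.
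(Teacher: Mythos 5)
There is a genuine gap, and it sits exactly where you locate the ``technical heart'': the comparison of fixed points is not a lemma you can defer --- it \emph{is} the theorem, and no soft argument via the regular embedding will deliver it. On the group side, the semisimple characters of $\Galg^F$ whose $\widetilde{\Galg}^F$-orbit has length $\ell$ (equivalently, which lie in all $\ell$ duals of Gelfand--Graev characters) are counted by the classes $s$ with $|A_{\Galg^*}(s)^{F^*}|=\ell$, i.e.\ by $|\cal T_{\ell}|$ in the notation of Proposition~\ref{nbsschar}; on the Borel side, the splitting of $p'$-characters is governed by the orbits $J\subseteq\cal O$ with $\ker(h_{\Lalg_{m(J)}})=1$, via $n_J=|H^1(F,\cal Z(\Lalg_{m(J)}))|$ (Lemma~\ref{ordcent}). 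There is no natural ``Borel counterpart'' of a given semisimple character whose stabilizer you could compare; the two splitting counts live on different parameter sets, and equating them is a nontrivial quantitative identity. The paper proves it by computing both absolute counts in closed form, $|\Zz(\Galg)^{\circ F}|\bigl(q^l+(\ell^2-1)q^{l-(\operatorname{ss-rk}(\Lalg_{\zeta}))}\bigr)$: on the group side via the norm formula $\cyc{\Gamma_{z},\Gamma_{z'}}_{\Galg^F}$ of Proposition~\ref{norm}, which rests on the Digne--Lehrer--Michel coefficients $c_{\zeta}$ and the cuspidal Levi $\Lalg_{\zeta}$, feeding into Theorem~\ref{nbclss} and Proposition~\ref{semiprime}; on the Borel side via Propositions~\ref{borelmin} and~\ref{borelprem}, where the same cuspidal Levi reappears because $\cal L_{\operatorname{min}}(\{1\})$ controls the exponent $r$ (Remark~\ref{rk:mkdef}). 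Your proposal names Gelfand--Graev characters and cuspidal Levis as tools but never performs the computation that makes the exponents match, so the decisive step remains an unproven assertion equivalent to the absolute case of the theorem.

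A second, independent gap concerns the $\nu$-by-$\nu$ refinement. Even granting the global equality, orbit sizes $1$ or $\ell$ do not by themselves ``force'' the refined counts: a priori the characters with irreducible restriction could be distributed unevenly among the central characters $\nu$, and your descent gives no control over which $\nu$ each constituent lies over. The paper's mechanism here is not Clifford descent from $\widetilde{\Galg}^F$ at all, but a direct Mackey computation inside $\Galg^F$: setting $\Gamma_{z,\nu}=\Ind_{ZU}^{G}(\nu\otimes\phi_z)$, one shows $\cyc{\Gamma_{z,\nu},\Gamma_{z',\nu}}_G$ is independent of $\nu$ (Lemma~\ref{ind}, Corollary~\ref{nbnu}), and then an inclusion--exclusion over the sets $F_{z,\nu}$ --- valid precisely because $\ell$ prime forces every semisimple character to lie in exactly one or in all $\ell$ duals of Gelfand--Graev characters --- yields $|\Irr_{s}(\Galg^F|\nu)|=|\Irr_{s}(\Galg^F)|/|\Zz(\Galg^F)|$ (Proposition~\ref{relativeprime}); the analogous equidistribution for $\Balg^F$ is quoted from~\cite{BrHi} in Remark~\ref{rk:last}. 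So the paper's architecture is: equidistribution over $\nu$ on both sides, plus the absolute equality proved by matching two explicit polynomial counts. Your skeleton (regular embedding, orbit dichotomy for prime $\ell$) is a reasonable alternative frame, but as written it omits both load-bearing ingredients.
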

As consequence, this proves the relative McKay conjecture in defining
characteristic for $\Galg^F$ with $\Galg$ a simple group given in
Table~\ref{tab:ssprime}.

This paper is organized as follows. In Section~\ref{sec:levi}, we
recall some results from Bonnaf\'e~\cite{BonLevi} on the cuspidal Levi
subgroups of connected reductive groups. We will need this theory first,
in order to associate to every linear character of $\cal Z(\Galg)$ a
cuspidal Levi subgroup of $\Galg$ (corresponding to a cuspidal local
system in Lusztig theory), and secondly to control the disconnected
part of the inertial subgroup of linear characters of $\Ualg^F$.
In Section~\ref{sec:classe}, we apply the theory of Gelfand-Graev
characters of $\Galg^F$ for connected reductive group $\Galg$ with
disconnected center, developed by Digne-Lehrer-Michel in~\cite{DLM2}.
Note that we need here that $p$ is a good prime for $\Galg$. In
particular, we give a formula to compute the scalar product of two
Gelfand-Graev characters; see Proposition~\ref{norm}. As consequence,
we obtain an explicit formula for the number of semisimple
classes of $\Galg^F$ (see Theorem~\ref{nbclss}) and compute this
number
for $\Galg^F$ with $\Galg$ any simple algebraic group; 
see Corollary~\ref{nbcladjoint}. Recall that the constituents of the
duals of Gelfand-Graev characters (for the Alvis-Curtis duality functor) 
are the so-called semisimple characters of $\Galg^F$.
When $p$ is a good prime for $\Galg$, the semisimple characters
are the $p'$-characters of $\Galg^F$ (that is, the elements of
$\Irr_{p'}(\Galg^F)$). In Section~\ref{sec:nbsemicar}, using the results
of Section~\ref{sec:classe}, we compute the number of semisimple
characters of $\Galg^F$ when $H^1(F,\cal Z(\Galg))$ has prime order;
see Proposition~\ref{semiprime}. In Section~\ref{sec:borel}, we give a
formula for the number of $p'$-characters of $\Balg^F$ depending on the
cuspidal Levi subgroups of $\Galg$; see Proposition~\ref{borelmin}. 
Finally, in Section~\ref{sec:rest},
we show that if the center of $\Galg$ is connected or if $H^1(F,\cal
Z(\Galg))$ has prime order, then for a linear character $\nu$ of
$\Zz(\Galg^F)$ the number of semisimple characters
of $\Galg^F$ lying over $\nu$ 
does not depend on $\nu$; see Proposition~\ref{centreconnexerelative}
and Proposition~\ref{relativeprime}. We can then prove
Theorem~\ref{main}; see Remark~\ref{rk:last}.

\section{Cuspidal Levi subgroups and central
characters}\label{sec:levi}
Let $\Galg$ be a connected reductive group defined over $\F_q$ with
corresponding Frobenius map $F:\Galg\rightarrow\Galg$. As above, we denote by
$\Talg$ a maximal $F$-stable torus of $\Galg$ contained in an $F$-stable Borel
subgroup $\Balg$ of $\Galg$.
Write $\Phi$ for the root system of $\Galg$ and $\Phi^+$ for the set
of positive roots with respect to $\Balg$. Denote by $\Delta$ the set
of corresponding simple roots and by $W$ the Weyl group of $\Galg$ with
respect to $\Talg$, identified with the quotient $\Nn(\Talg)/\Talg$.
Moreover, we associate to every $\alpha\in\Phi$ a reflection $w_{\alpha}\in W$
and for any subset $I$ of $\Delta$, we denote by $W_I$ the subgroup of $W$
generated by $w_{\alpha}$ for $\alpha\in I$. The subgroup $\Palg_I=\Balg
W_I\Balg$ is a standard parabolic subgroup of $\Galg$ (relative to
$\Balg$). We denote by $\Lalg_I$ the Levi subgroup of $\Palg_I$
containing $\Talg$. Note that every Levi subgroup $\Lalg$ of $\Galg$ is
conjugate in $\Galg$ to a Levi $\Lalg_I$ for some subset $I$ of $\Delta$.

Let $\Lalg$ be a Levi subgroup of $\Galg$. Then the inclusion
$\Zz(\Galg)\subseteq\Zz(\Lalg)$ induces a surjective map
$$h_{\Lalg}:\cal Z(\Galg)\rightarrow \cal Z(\Lalg),$$ where $\cal
Z(\Galg)=\Zz(\Galg)/\Zz(\Galg)^{\circ}$. We recall that $\Galg$ is
cuspidal if $\ker(h_{\Lalg})\neq\{1\}$ for every proper Levi $\Lalg$ of
$\Galg$. Moreover, a linear character $\zeta$ of $\cal Z(\Galg)$ is
cuspidal if, for every Levi subgroup $\Lalg$ of $\Galg$, the subgroup
$\ker(h_{\Lalg})$ is not contained in $\ker(\zeta)$.

Let $\zeta$ be a linear character of $\cal Z(\Galg)$. Then there is a
Levi subgroup $\Lalg$ (which is cuspidal) and a cuspidal character
$\zeta_{\Lalg}$ of $\cal Z(\Lalg)$ such that $$\zeta=\zeta_{\Lalg}\circ
h_{\Lalg}.$$
More precisely, for a subgroup $K$ of $\cal Z(\Galg)$, denote by 
$\cal L_0(K)$ the set of Levi subgroups $\Lalg$ of $\Galg$ such that
$\ker(h_{\Lalg})\subseteq K$ and by $\cal L_{\operatorname{min}}(K)$ the
subset of minimal elements of $\cal L_0(K)$. In~\cite[2.16]{BonLevi},
Bonnaf\'e proves that the Levi subgroups of $\cal L_{\operatorname{min}}(K)$
are cuspidal and $\Galg$-conjugate. Therefore, we associate to
the linear character $\zeta$ of $\cal Z(\Galg)$ a standard Levi
$\Lalg_I$ of $\cal L_{\operatorname{min}}(\ker(\zeta))$. Note that 
all Levi subgroups in $\cal L_{\operatorname{min}}(K)$ have the same
semisimple rank.

Let $H^1(F,\cal Z(\Galg))$ be the set of $F$-classes of $\cal
Z(\Galg)$. Since $\cal Z(\Galg)$ is abelian, the Lang map $\cal L:\cal
Z(\Galg)\rightarrow\cal Z(\Galg):g\mapsto g^{-1}F(g)$ is a morphism of
groups and we have $H^1(F,\cal Z(\Galg))=\cal Z(\Galg)/\cal L(\cal
Z(\Galg))$. In particular, a character $\zeta$ of $H^1(F,\cal
Z(\Galg))$ can be seen as a character of $\cal Z(\Galg)$ with $\cal
L(\cal Z(\Galg))$ in its kernel. Hence, we can associate to every
character $\zeta$ of $H^1(F,\cal Z(\Galg))$ a cuspidal 
Levi $\Lalg$ of $\Galg$ and a cuspidal 
$\zeta_{\Lalg}$ of $\cal Z(\Lalg)$.  Note that $\Lalg$ can be chosen
$F$-stable and with this choice, $\zeta_{\Lalg}$ is $F$-stable.

In the following, we write $H^1(F,\cal Z(\Galg))^{\wedge}$ for the set
of irreducible characters of $H^1(F,\cal Z(\Galg))$.

\section{Number of semisimple classes}\label{sec:classe}
\subsection{Gelfand-Graev characters}\label{ggc}
Let $\Galg$ be a connected reductive group defined over 
$\F_q$ with Frobenius map $F:\Galg\rightarrow\Galg$.
We denote by $\Talg$ a maximal $F$-stable torus of $\Galg$ contained in
an $F$-stable Borel subgroup $\Balg$ of $\Galg$. We write $\Ualg$ for the
unipotent radical of $\Balg$. We recall that $p$ is supposed to be a
good prime for $\Galg$.

As above, we denote by $\Phi$ the root system of $\Galg$, by $\Phi^+$ the set
of positive roots with respect to $\Balg$ and by $\Delta$ the set
of corresponding simple roots. We write $\Xalg_{\alpha}$ for the
non-trivial minimal closed unipotent subgroup of $\Ualg$ normalized by
$\Talg$ and corresponding to the root $\alpha\in\Phi^+$.
Recall that the Frobenius map $F$ induces a permutation on $\Phi$ such
that $F(\Phi^+)=\Phi^+$ and $F(\Delta)=\Delta$.
Put $$\Ualg_0=\prod_{\alpha\in\Phi^+\backslash\Delta}\Xalg_{\alpha}.$$
Denote by $\Ualg_1$ the quotient $\Ualg/\Ualg_0$ and write
$\pi_{\Ualg_0}:\Ualg\rightarrow\Ualg_1$ for the canonical projection
map. Then we have
$\Ualg_1\simeq\prod_{\alpha\in\Delta}\Xalg_{\alpha}$ and
\begin{equation}\label{eq:u1}
\Ualg_1^{F}=\prod_{\omega\in\cal O}\Xalg_{\omega}^{F},
\end{equation}
where $\cal O$ is the set of $F$-orbits on $\Delta$ and
$\Xalg_{\omega}=\prod_{\alpha\in\omega}\Xalg_{\alpha}$.
Recall that an element of $\Galg$ is regular if its centralizer has a
minimal possible dimension. By~\cite[14.14]{DM} the regular unipotent
elements of $\Ualg$ are the elements $u\in\Ualg$ such that for every
$\alpha\in\Delta$, $\pi_{\Ualg_0}(u)_{\alpha}\neq 1$.
Moreover by~\cite[14.25]{DM}, the set of regular unipotent
classes of $\Galg^{F}$ are parametrized by
$H^1(F,\cal Z(\Galg))$. For $z\in H^1(F,\cal Z(\Galg))$, 
denote by $\cal U_z$ the conjugacy class of unipotent elements
corresponding to $z$ and put
$$\gamma_z:\Galg^F\rightarrow\C,g\mapsto
\left\{\begin{array}{ll}
|\cal U_z|/|\Galg^F|&\textrm{if }g\in\cal U_z\\
0&\textrm{otherwise}
\end{array}
\right.
$$

Recall that a linear character $\psi$ of $\Ualg^{F}$ is a
regular character if it has $\Ualg_0^{F}$ in its kernel and if the
induced linear character on $\Ualg_1^{F}$ (always denoted by $\psi$)
satisfies $\Res_{\Xalg_{\omega}^{F}}^{\Ualg_1^{F}}(\psi)\neq
1_{\Xalg_{\omega}^{F}}$ for every $\omega\in\cal O$.
By~\cite[14.28]{DM}, the set of $\Talg^F$-orbits of
regular characters of $\Ualg^{F}$ is parametrized by
$H^1(F,\cal Z(\Galg))$ as follows.

Fix $\psi_1$ a regular linear character of $\Ualg^{F}$ and $z\in
H^1(F,\cal Z(\Galg))$. Choose
$t_z\in\Talg$ such that $t_z^{-1}F(t_z)\Zz(\Galg^{F})=z$. Then
the $\Talg^F$-orbit of the regular characters of $\Ualg^{F}$
corresponding to $z$ has $\psi_z=^{t_z}\!\psi_1$ for representative.

We now can define the Gelfand-Graev characters of $\Galg^{F}$ 
by setting for every $z\in
H^1(F,\cal Z(\Galg))$
$$\Gamma_z=\Ind_{\Ualg^{F}}^{\Galg^{F}}(\phi_z).$$
Denote by $D_{\Galg}$ the Alvis-Curtis duality map. For $z\in H^1(F,\cal
Z(\Galg))$, there is a virtual character $\varphi_z$ of $\Ualg^F$ (see the
proof of~\cite[14.33]{DM}) with $\Ualg_0^F$ in its kernel,
which is zero outside regular unipotent elements and satisfying
$$D_{\Galg}(\Gamma_z)=\Ind_{\Ualg^F}^{\Galg^F}(\varphi_z).$$
In particular, $D_{\Galg}(\Gamma_z)$ is constant on $\cal U_z$ and
there are complex numbers $c_{z,z'}$ (for $z'\in H^1(F,\cal
Z(\Galg))$) with
\begin{equation}\label{eq:dg}
D_{\Galg}(\Gamma_z)=\sum_{z'\in H^1(F,\cal
Z(\Galg))}c_{z,z'}\gamma_{z'}.
\end{equation}

Following~\cite{DLM2}, we now recall how to compute the coefficients
$c_{z,z'}$. For this, we need some notations.
For $z\in H^1(F,\cal
Z(\Galg))$, put
$$\sigma_z=\sum_{\psi\in \Psi_{z^{-1}}}\psi(u),$$
where $u\in \cal U_1$ and $\Psi_z$ denotes the $\Talg^F$-orbit of
$\psi_z$. Moreover, for any character $\zeta$ of $H^1(F,\cal
Z(\Galg))$, we define
$$\sigma_{\zeta}=\sum_{z\in H^1(F,\cal Z(\Galg))}\zeta(z)\sigma_z.$$
In~\cite[2.3,\,2.5]{DLM2}, the following result is proven.

\begin{proposition}\label{propDLM}
With the above notation, if $p$ is a good prime for $\Galg$, then
the matrix $(c_{z,z'})_{z,\,z'\in H^1(F,\cal Z(\Galg))}$ is invertible and 
its inverse is $(\eta_{\Galg}\sigma_{z{(z')}^{-1}})_{z,z'\in H^1(F,\cal
Z(\Galg))}$, where $\eta_{\Galg}=(-1)^{\F_q\operatorname{-rk}(\Galg)}$. 
Moreover, we have $c_{z,z'}=c_{z(z')^{-1},1}$ and if we put
$c_{\zeta}=\sum_{z\in H^1(F,\cal Z(\Galg))}\zeta(z)c_{z,1}$ for
any character $\zeta$ of $H^1(F,\cal Z(\Galg))$, then there is a
fourth root of unity $\xi_{\zeta}$ such that
$$c_{\zeta}=\eta_{\Galg}\eta_{\Lalg}q^{-\frac{1}{2}
(\operatorname{ss-rk}(\Lalg_{\zeta}))}\xi_{\zeta},$$
where $\Lalg_{\zeta}$ is the cuspidal Levi of $\Galg$ associated to the
character $\zeta$ as explained in Section~\ref{sec:levi}.
\end{proposition}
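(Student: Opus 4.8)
The plan is to recognise both matrices as convolution operators over the finite abelian group $A=H^1(F,\cal Z(\Galg))$ and to diagonalise them simultaneously through the character group of $A$. Since $\gamma_{z'}$ is the normalised indicator function of the regular unipotent class $\cal U_{z'}$, the coefficient $c_{z,z'}$ in~(\ref{eq:dg}) is nothing but the value of $D_{\Galg}(\Gamma_z)$ on $\cal U_{z'}$, rescaled by $|\Galg^F|/|\cal U_{z'}|$. The statement then breaks into three tasks: (i) establishing the translation invariance $c_{z,z'}=c_{z(z')^{-1},1}$, which says that $(c_{z,z'})$ is a group matrix; (ii) identifying its inverse with the group matrix $(\eta_{\Galg}\sigma_{z(z')^{-1}})$; and (iii) computing the Fourier eigenvalues $c_{\zeta}$ explicitly in terms of the cuspidal Levi $\Lalg_{\zeta}$ of Section~\ref{sec:levi}.

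For (i) I would exploit that the regular characters $\psi_z$ and the regular unipotent classes $\cal U_z$ are parametrised by the \emph{same} group $A$ in a compatible way, the relation $\psi_z={}^{t_z}\psi_1$ encoding a $\Talg$-twisting of the base-point. A simultaneous change of base-point $z\mapsto zw^{-1}$ translates both parametrisations at once, and since $D_{\Galg}$ is a purely group-theoretic operation it respects this translation; hence $c_{z,z'}$ depends only on $z(z')^{-1}$. For (ii) the key input is that $D_{\Galg}$ is an involution ($D_{\Galg}^2=\operatorname{id}$), so that, up to the sign $\eta_{\Galg}=(-1)^{\F_q\operatorname{-rk}(\Galg)}$ from the normalisation of the duality, the passage $\{\Gamma_z\}\mapsto\{D_{\Galg}(\Gamma_z)\}$ is its own inverse on the span of these class functions. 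Computing the values of $\Gamma_z=\Ind_{\Ualg^F}^{\Galg^F}(\phi_z)$ on the classes $\cal U_{z'}$ via the induced-character formula, and matching them against the sums $\sigma_{z(z')^{-1}}$, then identifies the inverse matrix. As both matrices are group matrices, (i) and (ii) are equivalent after Fourier transform to the single scalar identity $c_{\zeta}\,\eta_{\Galg}\sigma_{\zeta}=1$ for every character $\zeta$ of $A$, so that $c_{\zeta}=\eta_{\Galg}/\sigma_{\zeta}$.

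It remains to evaluate the Gauss sum $\sigma_{\zeta}$, which is the heart of the matter. Using~(\ref{eq:u1}) and the fact that a regular character is nontrivial on each factor $\Xalg_{\omega}^{F}$ for $\omega\in\cal O$, one rewrites $\sigma_{\zeta}=\sum_{z}\zeta(z)\sigma_z$ as a product over the $F$-orbits $\omega\in\cal O$ of local sums $\sum_{\chi\neq 1}\chi(u_{\omega})$ weighted by $\zeta$, each taken over the nontrivial additive characters of $\Xalg_{\omega}^{F}$. Here the hypothesis that $p$ is good for $\Galg$ guarantees that these are genuine quadratic Gauss sums: each nontrivial factor has modulus $q^{|\omega|/2}$ and argument a fourth root of unity. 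When $\zeta$ is cuspidal all orbits contribute, yielding the power $q^{\operatorname{ss-rk}(\Galg)/2}$; in general the description of $\zeta$ in Section~\ref{sec:levi} shows that $\zeta$ is trivial on $\ker(h_{\Lalg_{\zeta}})$, which makes the factors attached to simple roots outside $\Lalg_{\zeta}$ collapse and reduces the surviving rank to $\operatorname{ss-rk}(\Lalg_{\zeta})$. Collecting the moduli gives the factor $q^{\frac{1}{2}\operatorname{ss-rk}(\Lalg_{\zeta})}$, the product of the local fourth-root-of-unity phases is again a fourth root of unity $\xi_{\zeta}$, and the sign $\eta_{\Lalg}$ emerges from the $\F_q$-rank bookkeeping of the reduction to $\Lalg_{\zeta}$; inverting through $c_{\zeta}=\eta_{\Galg}/\sigma_{\zeta}$ then yields the claimed formula.

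The main obstacle I expect is precisely this reduction to the cuspidal Levi in (iii): showing that the effective number of contributing Gauss-sum factors is the semisimple rank of $\Lalg_{\zeta}$ rather than of $\Galg$, and that the accumulated phase is a \emph{fourth} root of unity. This requires carefully matching the $A$-parametrisation of the regular data with the Harish-Chandra and Levi combinatorics of Section~\ref{sec:levi}, ideally through the compatibility $D_{\Galg}\circ\Ind\cong\pm\,\Ind\circ D_{\Lalg_{\zeta}}$ of Alvis-Curtis duality with Harish-Chandra induction, and it is where the good-prime hypothesis is genuinely used.
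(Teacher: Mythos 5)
You should first note what the benchmark is here: the paper does not prove Proposition~\ref{propDLM} at all --- it is imported verbatim from Digne--Lehrer--Michel~\cite[2.3,\,2.5]{DLM2}, so your attempt has to be measured against their argument. Its skeleton you do reproduce correctly: Fourier (Mellin) transform over the abelian group $A=H^1(F,\cal Z(\Galg))$, translation invariance making $(c_{z,z'})$ a group matrix, and reduction of $\sigma_{\zeta}$ to a product over the $F$-orbits $\omega\in\cal O$ of Gauss sums $\sum_{x\in\F_{q^{|\omega|}}^{\times}}\chi_{\omega}(x)\psi_{\omega}(xu_{\omega})$, where the multiplicative characters $\chi_{\omega}$ encode a character of the adjoint torus trivial on the image of $\Talg^F$. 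Your step (i) is also fine in spirit: conjugation by $\widetilde{\Talg}^F$ for a regular embedding $\Galg\subseteq\widetilde{\Galg}$ translates the parametrizations of the $\Gamma_z$ and of the classes $\cal U_{z'}$ simultaneously and commutes with $D_{\Galg}$.

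There are, however, two genuine gaps. First, in (ii) the involutivity $D_{\Galg}^2=\operatorname{id}$ does not by itself identify the inverse matrix: the functions $\gamma_{z'}$ are not permuted by $D_{\Galg}$, and $\eta_{\Galg}$ is not a ``normalisation of the duality'' ($D_{\Galg}$ is exactly involutive, with no sign). What is actually needed is the explicit form of the function $\varphi_z$ with $D_{\Galg}(\Gamma_z)=\Ind_{\Ualg^F}^{\Galg^F}(\varphi_z)$ from the proof of~\cite[14.33]{DM} --- a Curtis-homomorphism type computation; that is where $\eta_{\Galg}$ and the powers of $q$ enter, and where the good-prime hypothesis is really used (regular unipotent theory, abelian centralizers), not, as you suggest, to make the local sums ``genuine quadratic Gauss sums''. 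Second, and more seriously, your claim in (iii) that each nontrivial local factor has modulus $q^{|\omega|/2}$ \emph{and argument a fourth root of unity} is false: for $\chi$ of order $d>2$ the normalised Gauss sum is in general not a root of unity at all (cubic Gauss sums, Kummer's problem). The fourth-root property of $\xi_{\zeta}$ is a global phenomenon: the multiset $(\chi_{\omega})$ attached to $\zeta$ is stable under $\chi\mapsto\overline{\chi}$, conjugate pairs combine via $g(\chi)g(\overline{\chi})=\chi(-1)q^{|\omega|}$ into signs, and only quadratic characters can survive unpaired, each contributing a phase in $\{\pm 1,\pm i\}$. (For instance in split $\operatorname{SL}_{n+1}$ the tuple is $\chi_j=\theta^{j}$, $1\le j\le n$, with $\theta^{n+1}=1$, paired by $j\leftrightarrow n+1-j$; the support $\{j:\theta^{j}\ne 1\}$ has size $\frac{n+1}{d}(d-1)$, matching Table~\ref{tab:bon}.) Finally, the identification of the support $\{\omega:\chi_{\omega}\ne 1\}$ with the orbits spanning the cuspidal Levi $\Lalg_{\zeta}$ --- which you rightly flag as the main obstacle --- is precisely the content of Bonnaf\'e's classification and the case analysis carried out in~\cite{DLM2}; without it, and without the pairing argument just described, the formula $c_{\zeta}=\eta_{\Galg}\eta_{\Lalg}q^{-\frac{1}{2}(\operatorname{ss-rk}(\Lalg_{\zeta}))}\xi_{\zeta}$ is asserted rather than proved. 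So: right architecture, but the two quantitative claims that carry the proposition are not established, and one supporting claim is incorrect as stated.
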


\begin{proposition}\label{norm}
With the notation as above, if $p$ is a good prime for $\Galg$, then for
$z_1,\,z_2\in H^1(F,\cal Z(\Galg))$, one has
$$\cyc{\Gamma_{z_1},\Gamma_{z_2}}_{\Galg^F}=|\Zz(\Galg)^{\circ F}|
\sum_{\zeta\in  H^1(F,\cal Z(\Galg))^{\wedge}}\overline{\zeta(z_1)}\zeta(z_2)
q^{l-(\operatorname{ss-rk}(\Lalg_{\zeta}))},$$
where $\Lalg_{\zeta}$ is the cuspidal Levi of $\Galg$ associated to
the character $\zeta$ of $H^1(F,\cal Z(\Galg))$ and $l$ is the
semisimple rank of $\Galg$.
\end{proposition}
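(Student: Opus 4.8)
The plan is to exploit the fact that the Alvis--Curtis duality $D_{\Galg}$ is an isometry of the space of class functions with $D_{\Galg}^{2}=\mathrm{id}$, so that $\cyc{\Gamma_{z_1},\Gamma_{z_2}}_{\Galg^F}=\cyc{D_{\Galg}(\Gamma_{z_1}),D_{\Galg}(\Gamma_{z_2})}_{\Galg^F}$, and then to evaluate the right-hand side through the expansion~(\ref{eq:dg}) together with the numerical data of Proposition~\ref{propDLM}. Writing $A=H^1(F,\cal Z(\Galg))$, which is a finite abelian group, the computation is at heart a Fourier analysis on $A$.

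First I would record the orthogonality and normalization of the class functions $\gamma_{z'}$. Each $\gamma_{z'}$ is supported on the single regular unipotent class $\cal U_{z'}$, and distinct parameters give disjoint classes, so $\cyc{\gamma_{z'},\gamma_{z''}}_{\Galg^F}=0$ whenever $z'\neq z''$. All regular unipotent classes moreover have the same cardinality: in good characteristic the centralizer of a regular unipotent element $u$ has identity component $\Zz(\Galg)^{\circ}$ times a connected unipotent group of dimension $l$, and component group isomorphic to $\cal Z(\Galg)$, whence $|\Cen_{\Galg^F}(u)|=|\Zz(\Galg)^F|\,q^{l}$ independently of $u$. Using the exact sequence $1\to\Zz(\Galg)^{\circ}\to\Zz(\Galg)\to\cal Z(\Galg)\to 1$ and Lang's theorem for the connected group $\Zz(\Galg)^{\circ}$, one has $|\Zz(\Galg)^F|=|\Zz(\Galg)^{\circ F}|\,|A|$. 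Evaluating $\cyc{\gamma_{z'},\gamma_{z'}}_{\Galg^F}$ from this common centralizer order then gives a single scalar $\mu$, independent of $z'$, equal to $|\Zz(\Galg)^{\circ F}|\,|A|\,q^{l}$, and substituting~(\ref{eq:dg}) yields $\cyc{\Gamma_{z_1},\Gamma_{z_2}}_{\Galg^F}=\mu\sum_{z'\in A}\overline{c_{z_1,z'}}\,c_{z_2,z'}$.

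Next I would diagonalize the correlation sum $\sum_{z'\in A}\overline{c_{z_1,z'}}\,c_{z_2,z'}$ by Fourier analysis on $A$. Using the relation $c_{z,z'}=c_{z(z')^{-1},1}$ from Proposition~\ref{propDLM}, this sum is an autocorrelation of the function $z\mapsto c_{z,1}$, whose discrete Fourier transform at $\zeta\in H^1(F,\cal Z(\Galg))^{\wedge}$ is exactly $c_{\zeta}=\sum_{z\in A}\zeta(z)c_{z,1}$. The Parseval identity therefore gives $\sum_{z'\in A}\overline{c_{z_1,z'}}\,c_{z_2,z'}=\frac{1}{|A|}\sum_{\zeta}|c_{\zeta}|^{2}\,\overline{\zeta(z_1)}\,\zeta(z_2)$, the placement of the complex conjugates being dictated by the two displayed formulas for $c_{z,z'}$ and $c_{\zeta}$. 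Finally, Proposition~\ref{propDLM} gives $c_{\zeta}=\eta_{\Galg}\eta_{\Lalg}q^{-\frac12\operatorname{ss-rk}(\Lalg_{\zeta})}\xi_{\zeta}$ with $\xi_{\zeta}$ a fourth root of unity and $\eta_{\Galg},\eta_{\Lalg}\in\{\pm 1\}$, so that $|c_{\zeta}|^{2}=q^{-\operatorname{ss-rk}(\Lalg_{\zeta})}$. Combining the three expressions, the factor $|A|$ in $\mu$ cancels the $1/|A|$ from Parseval, the factor $q^{l}$ in $\mu$ upgrades $q^{-\operatorname{ss-rk}(\Lalg_{\zeta})}$ to $q^{l-\operatorname{ss-rk}(\Lalg_{\zeta})}$, and the factor $|\Zz(\Galg)^{\circ F}|$ survives, producing exactly the asserted formula.

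The main obstacle is the second step: verifying that all regular unipotent classes share the centralizer order $|\Zz(\Galg)^F|\,q^{l}$ and pinning down the resulting scalar $\mu$, which is precisely where the hypothesis that $p$ is good enters, through the structure of regular unipotent centralizers (and, via Proposition~\ref{propDLM}, through the $c_{\zeta}$). A secondary point requiring care is the bookkeeping of conjugates and inverses in the Fourier step, so that the autocorrelation produces $\overline{\zeta(z_1)}\,\zeta(z_2)$ and not its conjugate; this is forced by the symmetry $c_{z,z'}=c_{z(z')^{-1},1}$ and does not affect the final answer.
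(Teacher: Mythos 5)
Your proposal is correct and takes essentially the same route as the paper's proof: isometry of $D_{\Galg}$, expansion in the $\gamma_{z'}$ with orthogonality and the common centralizer order $|\Zz(\Galg)^F|q^l$ (which the paper obtains by citing~\cite[14.22,\,14.23]{DM} where you argue structurally), then Fourier analysis on $H^1(F,\cal Z(\Galg))$ — your Parseval step is exactly the paper's inversion of the character table via $c_z=\frac{1}{m}\sum_{\zeta}\overline{\zeta(z)}c_{\zeta}$ — and finally $|c_{\zeta}|^2=q^{-\operatorname{ss-rk}(\Lalg_{\zeta})}$. Your closing remark about the placement of conjugates is also right, since $\zeta$ and $\overline{\zeta}$ have the same kernel and hence the same associated cuspidal Levi, so the substitution $\zeta\mapsto\overline{\zeta}$ identifies your expression with the stated one.
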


\begin{proof}Fix $z_1$ and $z_2$ in $H^1(F,\cal
Z(\Galg))$ and put $I=\cyc{\Gamma_{z_1},\Gamma_{z_2}}_{\Galg^F}$.
Since the duality functor $D_{\Galg}$ is an isometry, one has
$I=\cyc{D_{\Galg}(\Gamma_{z_1}),D_{\Galg}(\Gamma_{z_2})}_{\Galg^F}$.
Furthermore, thanks to Equation~(\ref{eq:dg}), we deduce
$$
\cyc{D_{\Galg}(\Gamma_{z_1}),D_{\Galg}(\Gamma_{z_2})}_{\Galg^F}=
\sum_{z,z'\in H^1(F,\cal
Z(\Galg))}c_{z_1,z}\overline{c_{z_2,z'}}\cyc{\gamma_{z},\gamma_{z'}}_{\Galg^F}.
$$
Note that, if $z'\neq z$, then
$\cyc{\gamma_z,\gamma_{z'}}_{\Galg^F}=0$. Moreover,
$\cyc{\gamma_z,\gamma_z}_{\Galg^F}=|\Cen_{\Galg^F}(u_{z})|$ for $u_{z}\in\cal U_{z}$. 
We deduce

\begin{equation}\label{eq:pr1}
I=
\sum_{z\in H^1(F,\cal
Z(\Galg))}c_{z_1,z}\overline{c_{z_2,z}}
|\Cen_{\Galg^F}(u_{z})|,
\end{equation}
However, the group $\Cen_{\Galg}(u_1)$
is abelian (because the characteristic is good for $\Galg$). It then
follows that  $|\Cen_{\Galg^F}(u_{z})|=|\Cen_{\Galg^F}(u_{1})|$ for
every $z\in H^1(F,\cal Z(\Galg))$; see~\cite[14.22]{DM}. 
Moreover,~\cite[14.23]{DM} implies $$|H^1(F,\cal
Z(\Galg))|\frac{|\Galg^F|}{|\Cen_{\Galg^F}(u_{z})|}=
\frac{|\Galg^F|}{|\Zz(\Galg)^{\circ F}|q^l}.$$
Since $|H^1(F, \cal Z(\Galg))|=|\Zz(\Galg)^F|/|\Zz(\Galg)^{\circ
F}|$, we deduce
\begin{equation}\label{eq:pr2}
|\Cen_{\Galg^F}(u_{z})|=|\Zz(\Galg)^F|q^l.
\end{equation}
For every $\zeta\in H^1(F,\cal Z(\Galg))^{\wedge}$, we have 
$c_{\zeta}=\sum_{z\in H^1(F,\cal Z(\Galg))}\zeta(z)c_{z,1}$.
Denote by $T$ the character table of $H^1(F,\cal
Z(\Galg))$ (identified with the quotient group $\cal Z(\Galg)/\cal
L(\cal Z(\Galg))$ as above). Write  $m=|H^1(F,\cal
Z(\Galg))|$.
Since $T$ is 
the character table of a finite abelian group, it follows that $T$
is invertible and $T^{-1}=\frac{1}{m}^t\overline{T}$. We then deduce
that, for every $z\in H^1(F,\cal Z(\Galg))$
\begin{equation}
\label{eq:prr}
c_z=\frac{1}{m}\sum_{\zeta\in H^1(F,\cal
Z(\Galg))^{\wedge}}\overline{\zeta(z)}c_{\zeta}.
\end{equation}
Furthermore, by Proposition~\ref{propDLM} one has
$c_{z_i,z}=c_{z_i(z)^{-1},1}$. Then Equations~(\ref{eq:pr1}),
(\ref{eq:pr2}) and~(\ref{eq:prr}) imply

\renewcommand{\arraystretch}{1.2}
\begin{eqnarray*}
I&=&
\sum_{z\in H^1(F,\cal
Z(\Galg))}\frac{1}{m^2}\sum_{\zeta,\zeta'\in H^1(F,\cal
Z(\Galg))^{\wedge}}\overline{\zeta(z_1z^{-1})}\zeta'(z_2z^{-1})
|\Cen_{\Galg^F}(u_{z})|c_{\zeta}\overline{c_{\zeta'}}\\
&=&\frac{|\Zz(\Galg)^F|q^l}{m}\sum_{\zeta,\zeta'\in H^1(F,\cal
Z(\Galg))^{\wedge}}\overline{\zeta(z_1)}\zeta'(z_2)
\cyc{\zeta,\zeta'}_{H^1(F,\cal Z(\Galg))}c_{\zeta}\overline{c_{\zeta'}}\\
&=&\frac{|\Zz(\Galg)^F|q^l}{m}\sum_{\zeta\in H^1(F,\cal
Z(\Galg))^{\wedge}}\overline{\zeta(z_1)}\zeta(z_2)
|c_{\zeta}|^2.\\
\end{eqnarray*}
Now, Proposition~\ref{propDLM} implies
$c_{\zeta}=\eta_{\Galg}\eta_{\Lalg}
q^{-\frac{1}{2}(\operatorname{ss-rk}(\Lalg_{\zeta}))}\xi_{\zeta}$. Thus
$$|c_{\zeta}|^2=q^{-(\operatorname{ss-rk}(\Lalg_{\zeta}))}|\xi_{\zeta}|^2=q^{-(\operatorname{ss-rk}(\Lalg_{\zeta}))}.$$
Moreover, $$\frac{|\Zz(\Galg)^F|}{m}=|\Zz(\Galg)^{\circ F}|.$$
This proves the claim.
\end{proof}

\begin{remark}
Note that $\cyc{\Gamma_z,\Gamma_{z'}}_{\Galg^F}$ does not depend on the
fourth roots of unity $\xi_{\zeta}$ associated to $\zeta\in H^1(F,\cal
Z(\Galg))^{\wedge}$ as in Proposition~\ref{propDLM}. 
\end{remark}

\begin{remark}
If the center of $\Galg$ is connected, there is only one
Gelfand-Graev character $\Gamma_1$ and
the cuspidal Levi subgroup associated to the trivial character of $H^1(F,\cal
Z(\Galg))$ is a maximal torus, which has semisimple rank equal to zero. 
Thus, we obtain
$$\cyc{\Gamma_1,\Gamma_1}_{\Galg^F}=|\Zz(\Galg)^F|q^l,$$
which is a well-known result~\cite[8.3.1]{Carter2}.
\end{remark}

\subsection{Number of semisimple classes}

\begin{theorem}\label{nbclss}
Let $\Galg$ be a connected reductive group defined over a finite
field of characteristic $p>0$ with $q$ elements $\F_q$ and let
$F:\Galg\rightarrow\Galg$ denote the corresponding Frobenius map.
Write $\cal S$ for a set of representatives of semisimple classes of
$\Galg^F$. Denote by $(\Galg^*,F^*)$ a dual pair of $(\Galg,F)$.
With the above notation, if $p$ is a good
prime for $\Galg$, then we have
$$|\cal S|=|\Zz(\Galg)^{\circ F}|
\sum_{\zeta\in H^1(F^*,\cal
Z(\Galg^*))^{\wedge}}q^{l-(\operatorname{ss-rk}(\Lalg^*_{\zeta}))},$$
where $l$ is the semisimple rank of $\Galg$ and $\Lalg^*_{\zeta}$ is a
cuspidal Levi subgroup of $\Galg^*$ associated to $\zeta\in H^1(F^*,\cal
Z(\Galg^*))^{\wedge}$ as explained in Section~\ref{sec:levi}.
\end{theorem}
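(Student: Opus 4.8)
The plan is to pass to the dual group $\Galg^*$ and identify $|\cal S|$ with the self–inner product of a single Gelfand--Graev character of $\Galg^{*F^*}$, to which Proposition~\ref{norm} then applies directly.

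First I would use the duality between $\Galg^F$ and $\Galg^{*F^*}$ (see~\cite{DM}): the semisimple conjugacy classes of $\Galg^F$ are in bijection with the semisimple characters of $\Galg^{*F^*}$, so $|\cal S|$ equals the number of semisimple characters of $\Galg^{*F^*}$. Since the Alvis--Curtis map $D_{\Galg^*}$ is an isometry carrying the regular characters bijectively onto the semisimple characters (up to sign), this is also the number of \emph{regular} characters of $\Galg^{*F^*}$. By definition these are exactly the irreducible constituents of the Gelfand--Graev characters $\Gamma_z$ of $\Galg^{*F^*}$, each occurring with multiplicity one.

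Next I would show that this number equals $\cyc{\Gamma_1,\Gamma_1}_{\Galg^{*F^*}}$, where $\Gamma_1$ is the Gelfand--Graev character attached to $1\in H^1(F^*,\cal Z(\Galg^*))$. As $\Gamma_1$ is multiplicity free, $\cyc{\Gamma_1,\Gamma_1}_{\Galg^{*F^*}}$ counts precisely its regular constituents; the point to check is that every regular character of $\Galg^{*F^*}$ already occurs in $\Gamma_1$, so that this one character accounts for all of them. This is where I expect the main difficulty to lie: when $\cal Z(\Galg^*)$ is disconnected there are several Gelfand--Graev characters, and one must verify, using the Digne--Lehrer--Michel description recalled in Proposition~\ref{propDLM}, that the sets of constituents of the various $\Gamma_z$ coincide. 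Equivalently, one wants the count to be independent of the chosen $z$, which is consistent with Proposition~\ref{norm} since $\cyc{\Gamma_z,\Gamma_z}_{\Galg^{*F^*}}$ does not depend on $z$ (because $|\zeta(z)|=1$); indeed one may write the number of regular characters as the common value $\cyc{\Gamma_1,\Gamma_1}_{\Galg^{*F^*}}$.

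Finally I would evaluate this inner product by Proposition~\ref{norm} applied to $\Galg^*$ with $z_1=z_2=1$: since $\zeta(1)=1$ for every $\zeta$, this gives
$$\cyc{\Gamma_1,\Gamma_1}_{\Galg^{*F^*}}=|\Zz(\Galg^*)^{\circ F^*}|\sum_{\zeta\in H^1(F^*,\cal Z(\Galg^*))^{\wedge}}q^{l-(\operatorname{ss-rk}(\Lalg^*_{\zeta}))},$$
where $l$ is the semisimple rank of $\Galg^*$, which equals that of $\Galg$. To recover the constant in the statement it then remains to invoke the standard duality fact that $\Zz(\Galg)^{\circ}$ and $\Zz(\Galg^*)^{\circ}$ are dual tori, whence $|\Zz(\Galg^*)^{\circ F^*}|=|\Zz(\Galg)^{\circ F}|$; substituting this identity yields the claimed formula for $|\cal S|$.
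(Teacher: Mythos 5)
There is a genuine gap in the middle of your argument, in two related places. First, the semisimple classes of $\Galg^F$ are \emph{not} in bijection with the semisimple characters of $\Galg^{*F^*}$ once $\Zz(\Galg^*)$ is disconnected: by (the analogue of) Proposition~\ref{nbsschar}, the number of semisimple characters is $\sum_d d^2\,|\mathcal{T}_d|$, whereas the number of semisimple classes of the dual group is $\sum_d d\,|\mathcal{T}_d|$, and these differ as soon as some component group $A(s)^{F}$ is nontrivial. Second, and consequently, your key step --- that every regular character of $\Galg^{*F^*}$ already occurs in $\Gamma_1$, i.e.\ that the constituent sets of the various $\Gamma_z$ coincide --- is false. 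What is true (and what you correctly extract from Proposition~\ref{norm}) is that the \emph{number} of constituents $\cyc{\Gamma_z,\Gamma_z}_{\Galg^{*F^*}}$ is independent of $z$; but equal cardinality is not equality of sets, and Proposition~\ref{norm} itself shows the sets differ: for $z\neq z'$ the character sum $\sum_{\zeta}\overline{\zeta(z)}\zeta(z')$ forces $\cyc{\Gamma_z,\Gamma_{z'}}_{\Galg^{*F^*}}<\cyc{\Gamma_z,\Gamma_z}_{\Galg^{*F^*}}$ whenever $H^1(F^*,\mathcal{Z}(\Galg^*))$ is nontrivial. Concretely, take $\Galg=\operatorname{PGL}_2$ with $q$ odd, so $\Galg^*=\SL_2$: then $|\mathcal{S}|=q+1$, each of the two Gelfand--Graev characters of $\SL_2(q)$ has $q+1$ constituents, $\cyc{\Gamma_1,\Gamma_z}_{\SL_2(q)}=q-1$ for $z\neq 1$, and the total number of regular (equivalently semisimple) characters of $\SL_2(q)$ is $q+3$ (see Table~\ref{tab:ssprime}). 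Your identification ``$|\mathcal{S}|=$ number of regular characters of $\Galg^{*F^*}$'' would therefore give $q+3$ instead of $q+1$.

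The correct bridge --- and the paper's actual argument --- is Digne--Michel's decomposition \cite[14.40, 14.49]{DM}: each Gelfand--Graev character of $\Galg^{*F^*}$ is multiplicity free with its constituents indexed by the semisimple classes of $\Galg^F$ itself, that is, $\Gamma_z=\sum_{s\in\mathcal{S}}\chi_{s,z}$. This yields $|\mathcal{S}|=\cyc{\Gamma_1,\Gamma_1}_{\Galg^{*F^*}}$ immediately, with no comparison of constituents across different $z$ and no global count of regular characters. Your concluding steps are then sound and even slightly more careful than the paper's: applying Proposition~\ref{norm} with $z_1=z_2=1$, and invoking that $\Galg$ and $\Galg^*$ have the same semisimple rank and that $|\Zz(\Galg^*)^{\circ F^*}|=|\Zz(\Galg)^{\circ F}|$ (a duality fact the paper uses silently), gives exactly the stated formula.
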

\begin{proof}
Denote by $(\Galg^*,\,F^*)$ a pair dual to $(\Galg,\,F)$.
As explained in Section~\ref{ggc}, we can associate to every $z\in
H^1(F^*,\cal Z(\Galg^*))$ a Gelfand-Graev character $\Gamma_z$ of
$\Galg^{*F^*}$. Recall that $\Gamma_z$ is multiplicity free. 
We can describe more precisely the constituents of $\Gamma_z$ as follows. 
%
Fix $s\in\cal S$. Using Deligne-Lusztig characters, Digne-Michel defined
in~\cite[14.40]{DM} a class function~$\chi_s$ and proved that for every
$z\in 
H^1(F^*,\cal Z(\Galg^*))$, there is exactly one irreducible character of
$\Galg^F$, denoted by $\chi_{s,z}$, which is a common constituent of
$\chi_s$ and $\Gamma_z$ and satisfying (see~\cite[14.49]{DM}):
\begin{equation}\label{eq:mulfree}
\Gamma_z=\sum_{s\in\cal S}\chi_{s,z}.
\end{equation}
Equation~(\ref{eq:mulfree}) implies $|\cal
S|=\cyc{\Gamma_1,\Gamma_1}_{\Galg^{*F^*}}$. Now, thanks to
Proposition~\ref{norm}, the result follows.
\end{proof}

We now will precise some notations. For a simple algebraic group
$\Galg$ defined over $\F_q$, if the corresponding Frobenius map is
split, then we denote it by $F^+$. Otherwise, if the $\F_q$-structure
is given by a non-split Frobenius, we denote it by $F^-$. Moreover,
if $\Galg$ is of type $X$ and has split and non-split Frobenius map
$F^+$ and $F^-$, then we put $^{\epsilon}X(q)=\Galg^{F^{\epsilon}}$ for
$\epsilon\in\{-1,1\}$.

Fix some positive integer $n$ and denote by $\Galg_{\operatorname{sc}}$ a simple
simply-connected algebraic group of type $A_n$. For any divisor $r$ of $n+1$,
there is a simple algebraic group $\Galg_r$ of type $A_n$ and a surjective
morphism $\pi_r:\Galg_{\operatorname{sc}}\rightarrow\Galg_r$ satisfying
$\ker(\pi_r)$ equals the subgroup of $\Zz(\Galg_{\operatorname{sc}})$ of order
$r$. If $\Galg_r$ is defined over $\F_q$ with Frobenius map $F^{\epsilon}$, then
put $^{\epsilon}A_n^{r}(q)=\Galg_r^{F^{\epsilon}}$.


\begin{corollary}\label{nbcladjoint}
Let $\Galg$ be a simple algebraic group defined over $\F_q$ with
corresponding Frobenius map $F$. 
If $\Galg^F$ is isomorphic to $\Galg_{\operatorname{sc}}^F$, then
the number of
semisimple classes of $\Galg^{F}$ is $q^n$, where $n$ is the semisimple
rank of $\Galg$. Otherwise, the number of semisimple classes of $\Galg^F$ 
is given in Table~\ref{tab:nbclass}. 
As usually, we denote by $\phi$ the Euler function.
\end{corollary}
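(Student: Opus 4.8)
The plan is to apply Theorem~\ref{nbclss} separately to each simple group in the table, reducing the abstract formula to an explicit count. The key observation is that Theorem~\ref{nbclss} expresses $|\cal S|$ as a sum over $H^1(F^*,\cal Z(\Galg^*))^{\wedge}$ weighted by $|\Zz(\Galg)^{\circ F}|$ and by powers of $q$ governed by the semisimple rank of the cuspidal Levi $\Lalg^*_{\zeta}$. So the entire computation amounts to understanding, for each type, three ingredients: the component group $\cal Z(\Galg^*)$ of the center of the dual group together with its $F^*$-action (hence $H^1(F^*,\cal Z(\Galg^*))$), the factor $|\Zz(\Galg)^{\circ F}|$, and the assignment $\zeta\mapsto\operatorname{ss-rk}(\Lalg^*_{\zeta})$ coming from the cuspidal-Levi theory of Section~\ref{sec:levi}.

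First I would dispose of the simply-connected case. If $\Galg^F\cong\Galg_{\operatorname{sc}}^F$, then $\Galg$ is simply connected, so its dual $\Galg^*$ is of adjoint type and has connected center; consequently $\cal Z(\Galg^*)$ is trivial, $H^1(F^*,\cal Z(\Galg^*))^{\wedge}$ has a single (trivial) character, and the associated cuspidal Levi is a maximal torus with $\operatorname{ss-rk}=0$ (as in the second Remark after Proposition~\ref{norm}). Moreover $\Zz(\Galg)^{\circ}$ is trivial for a simply-connected simple group, so $|\Zz(\Galg)^{\circ F}|=1$, and Theorem~\ref{nbclss} collapses to $|\cal S|=q^n$. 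This matches the well-known count of semisimple classes in a simply-connected group.

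Next I would treat the remaining groups listed in Table~\ref{tab:nbclass} one family at a time. For each, the central ingredient is the explicit structure of $\cal Z(\Galg^*)$ as an abelian group with its Frobenius action, together with the values $\operatorname{ss-rk}(\Lalg^*_{\zeta})$; both are extracted from the classification of root data and Bonnaf\'e's description recalled in Section~\ref{sec:levi}. For type $A_n$ the center is cyclic of order $n+1$, and the twist by $F^{\epsilon}$ together with the intermediate isogeny types $\Galg_r$ produces the various entries $^{\epsilon}A_n^{r}(q)$; here the Euler function $\phi$ enters because the characters $\zeta$ of a cyclic group are graded by their order, and characters of the same order contribute equal powers of $q$. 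For the classical types $B_n$, $C_n$, $D_n$ and the exceptional types with nontrivial center one enumerates the at most a few nontrivial characters $\zeta$, reads off the semisimple rank of the corresponding cuspidal Levi, and sums the resulting $q$-powers, scaling by $|\Zz(\Galg)^{\circ F}|$ when the center is not finite. Assembling these per-type sums yields precisely the closed forms recorded in Table~\ref{tab:nbclass}.

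I expect the main obstacle to be the bookkeeping in type $A_n$ and in the twisted cases: correctly identifying, for each divisor $r$ of $n+1$ and each sign $\epsilon$, the group $H^1(F^{\epsilon},\cal Z(\Galg_r^*))$ and grouping its characters by order so that the sum over $\zeta$ collapses into an expression involving $\phi$. The delicate point is that the $F^*$-action can permute or invert elements of the cyclic center (depending on $\epsilon$), which changes both the size of $H^1$ and the distribution of Levi ranks, so each of the finitely many combinatorial cases must be checked against Bonnaf\'e's minimal-Levi correspondence to guarantee that $\operatorname{ss-rk}(\Lalg^*_{\zeta})$ is assigned correctly. Once this case analysis is carried out uniformly, every entry of the table follows by direct substitution into Theorem~\ref{nbclss}.
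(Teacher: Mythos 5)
Your overall route is the same as the paper's: substitute into Theorem~\ref{nbclss}, determine $|H^1(F^{*},\cal Z(\Galg^{*}))|$ type by type (for a cyclic center of order $N$ it is $\gcd(N,q-\epsilon)$, as the paper extracts from~\cite{sol}), read off $\operatorname{ss-rk}(\Lalg^{*}_{\zeta})$ from Bonnaf\'e's classification of $\cal L_{\operatorname{min}}(K)$, and in type $A_n$ group the characters of the cyclic center by their order $d$, the $\phi(d)$ characters of order $d$ each contributing $q^{\frac{n+1}{d}-1}$ --- which is exactly how the Euler function enters in the paper. (One small simplification you could have noted: $\Galg$ simple forces $\Zz(\Galg)^{\circ}=1$, so the prefactor $|\Zz(\Galg)^{\circ F}|$ is always $1$ here.)

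There are, however, two concrete defects. First, your opening inference is false: $\Galg^{F}\cong\Galg_{\operatorname{sc}}^{F}$ does not imply that $\Galg$ is simply connected (for instance ${}^{\epsilon}A_n^r(q)$ with $\gcd(r,q-\epsilon)=1$, or adjoint groups in characteristic dividing the order of the fundamental group), so you cannot conclude that $\cal Z(\Galg^{*})$ is trivial; the correct reason the count collapses to $q^n$ is that $H^1(F^{*},\cal Z(\Galg^{*}))$ is trivial in these cases, a fact your own gcd computation supplies, so the gap is repairable inside your framework but the argument as written is wrong. Second, Bonnaf\'e's table of $\cal L_{\operatorname{min}}(K)$ (Table~\ref{tab:bon}) is stated for simply-connected groups, whereas the dual group $\Galg^{*}$ in the relevant cases is generally not simply connected; the paper bridges this by lifting along the universal cover $\pi:\Galg_{\operatorname{sc}}^{*}\rightarrow\Galg^{*}$, showing via~\cite[2.9,\,2.10]{BonLevi} that $\pi^{-1}(\Lalg_{\zeta}^{*})$ lies in $\cal L_{\operatorname{min}}(\pi^{-1}(\ker(\zeta)))$ with the same semisimple rank, and in the cyclic case reducing everything to the order of $\zeta$ via uniqueness of subgroups of given order. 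Your plan never addresses this transfer. Relatedly, you treat type $D_{2n}$ as routine enumeration, but there $\cal Z(\Galg_{\operatorname{sc}}^{*})\cong\mu_2\times\mu_2$ is not cyclic: the three nontrivial characters have the three distinct kernels $c_1,c_2,c_3$ with unequal Levi ranks ($n$, $n$ and $2$), and the $\operatorname{SO}_{4n}$ versus $\operatorname{HS}_{4n}$ rows of Table~\ref{tab:nbclass} depend on which $c_i$ equals $\ker(\pi)$ --- a case analysis the paper carries out explicitly and which your uniform ``read off the rank'' step would miss.
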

\begin{table}
$$
\renewcommand{\arraystretch}{1.4}
\begin{array}{cc|c|c}
\textrm{Type}&&&|\cal S|\\
\hline
^{\epsilon}A_n^r(q)&\mbox{\scriptsize $r\mid (n+1)$}&
m=\operatorname{gcd}(r,q-\epsilon)&\sum_{d/m}\phi(d)q^{\frac{n+1}{d}-1}\\
\hline
B_{n}(q)&\mbox{\scriptsize \textrm{adjoint}}
&
\begin{array}{c}
q=0\mod 2\\
q=1\mod 2
\end{array}
&
\begin{array}{c}
q^{n}\\
q^n+q^{n-1}
\end{array}
\\
\hline
C_{n}(q)&\mbox{\scriptsize \textrm{adjoint}}
&\begin{array}{c}
q=0\mod 2\\
q=1\mod 2
\end{array}
&\begin{array}{c}
q^n\\
q^{n}+q^{\lfloor n/2\rfloor}
\end{array}
\\
\hline
^{\epsilon}D_{2n+1}(q)&\mbox{\scriptsize \textrm{adjoint}}
&
\begin{array}{c}
q=0,2\mod 4\\
q=\epsilon\mod 4\\
q=-\epsilon\mod 4
\end{array}
&
\begin{array}{c}
q^{2n+1}\\
q^{2n+1}+2q^{n-1}+q^{2n-1}\\
q^{2n+1}+q^{2n-1}
\end{array}\\
\hline
\operatorname{SO}_{4n+2}^{\epsilon}(q)&
&
\begin{array}{c}
q=0\mod 2\\
q=1\mod 2\\
\end{array}
&
\begin{array}{c}
q^{2n+1}\\
q^{2n+1}+q^{2n-1}\\
\end{array}\\

\hline
^{\epsilon}D_{2n}(q)&\mbox{\scriptsize \textrm{adjoint}}
&
\begin{array}{c}
q=0\mod 2\\
q=1\mod 2
\end{array}
&
\begin{array}{c}
q^{2n}\\
q^{2n}+2q^n+q^{2n-2}
\end{array}\\
\hline
\operatorname{SO}_{4n}^{\epsilon}(q)&
&
\begin{array}{c}
q=0\mod 2\\
q=1\mod 2\\
\end{array}
&
\begin{array}{c}
q^{2n}\\
q^{2n}+q^{2n-2}\\
\end{array}\\

\hline
\operatorname{HS}_{4n}(q)&
&
\begin{array}{c}
q=0\mod 2\\
q=1\mod 2\\
\end{array}
&
\begin{array}{c}
q^{2n}\\
q^{2n}+q^{n}\\
\end{array}\\

\hline
^{\epsilon}E_6(q)&\mbox{\scriptsize \textrm{adjoint},\,$p\neq 2$}
&
\begin{array}{c}
q=0,-\epsilon\mod 3\\
q=\epsilon\mod 3
\end{array}
&
\begin{array}{c}
q^6\\
q^6+2q^2
\end{array}
\\
\hline
E_7&\mbox{\scriptsize \textrm{adjoint},\,$p\neq 3$}
&
\begin{array}{c}
q=0\mod 2\\
q=1\mod 2
\end{array}
&
\begin{array}{c}
q^7\\
q^7+q^4
\end{array}
\\
\end{array}
$$
\caption{Number of semisimple classes for simple algebraic groups.}
\label{tab:nbclass}
\end{table}

\begin{proof}

Let $\Galg$ be a simple algebraic group defined over $\F_q$ with
corresponding Frobenius $F$. Denote by $(\Galg^*,F^*)$ a pair dual to
$(\Galg,F)$. In table~\ref{tab:duality}, we recall simple algebraic
groups in duality.  
\begin{table}
$$
\renewcommand{\arraystretch}{1.4}
\begin{array}{cc|c}

&\Galg&\Galg^*\\
\hline
A_n&\Galg_r&\Galg_{(n+1)/r}\\
\hline
B_{n}&\textrm{simply-connected}&C_n
\textrm{ of type adjoint}\\
&\textrm{adjoint}&C_{n}\textrm{ of type simply-connected}\\
\hline
D_{2n+1}&\textrm{simply-connected}&\textrm{adjoint}\\
&\operatorname{SO}_{4n+2}&\operatorname{SO}_{4n+2}\\
\hline
D_{2n}&\textrm{simply-connected}&\textrm{adjoint}\\
&\operatorname{SO}_{4n}&\operatorname{SO}_{4n}\\
&\operatorname{HS}_{4n}&\operatorname{HS}_{4n}\\
\hline
E_{6}&\textrm{simply-connected}&\textrm{adjoint}\\
\hline
E_{7}&\textrm{simply-connected}&\textrm{adjoint}\\
\end{array}
$$
\caption{Groups in duality}
\label{tab:duality}
\end{table}

Fix a linear character $\zeta$ of $\cal Z(\Galg^*)$
%
and denote by $\Lalg_{\zeta}^*$ a cuspidal Levi subgroup of $\cal
L_{\operatorname{min}}(\ker(\zeta))$.
Write $\Galg_{\operatorname{sc}}^*$ for a simple simply-connected group
of the same version as $\Galg^*$ and by
$\pi:\Galg_{\operatorname{sc}}^*\rightarrow\Galg^*$ the universal cover
of $\Galg^*$. The endomorphism $F^*$ of $\Galg^*$ is induced by a
unique Frobenius map (also denoted by $F^*$) of
$\Galg_{\operatorname{sc}}^*$. Now, put
$\widehat{\Lalg}_{\zeta}^*=\pi^{-1}(\Lalg_{\zeta}^*)$. Note that
$\widehat{\Lalg}_{\zeta}^*$ is a Levi subgroup of
$\Galg_{\operatorname{sc}}^*$ 
with the same semisimple rank as $\Lalg_{\zeta}^*$.
Moreover, following~\cite[2.10]{BonLevi}, we deduce that
$\widehat{\Lalg}_{\zeta}^*\in\cal
L_{\operatorname{min}}(\pi^{-1}(\ker(\zeta)))$. Note that, since
$\Galg^*$
is simple, one has
$\ker(h_{\widehat{\Lalg}_{\zeta}^*})=\pi^{-1}(\ker(\zeta))$;
see~\cite[2.9]{BonLevi}.

Suppose now that $\cal Z(\Galg_{\operatorname{sc}}^*)$ is
cyclic of order $N$. Then $\cal Z(\Galg^*)$ is cyclic of order
$N'=N/|\ker(\pi)|$. Since
$\operatorname{Im}(\zeta)$ is a subgroup of $\C^{\times}$
of order $\operatorname{o}(\zeta)$ (we consider here $\Irr(\cal
Z(\Galg^*))$ as a group with product the tensor product of characters).
it in particular follows that $\ker(\zeta)$ has order $N'/\operatorname{o}(\zeta)$.
But there is only one subgroup $K$ of $\cal Z(\Galg^*)$ of order
$N'/\operatorname{o}(\zeta)$ and $\Lalg_{\zeta}^*$ is then a standard
Levi of $\cal L_{\operatorname{min}}(K)$ only depending on
$\operatorname{o}(\zeta)$.
Furthermore, one has
$$|\pi^{-1}(K)|=|K||\ker(\pi)|=\operatorname{N}/\operatorname{o}(\zeta).$$
Since $\cal Z(\Galg_{sc}^*)$ is cyclic, $\pi^{-1}(K)$ is then the unique subgroup of
order $N/\operatorname{o}(\zeta)$. Then $\widehat{\Lalg}_{\zeta}^*$ is a Levi subgroup of
$\Galg_{\operatorname{sc}}^*$ satisfying $|\ker(h_{
\widehat{\Lalg}_{\zeta}^*})|=N/\operatorname{o}(\zeta)$.

In~\cite[Table 2.17]{BonLevi}, Bonnaf\'e explicitly computes
$\cal{L}_{\operatorname{min}}(K)$ for any subgroup $K$ of $\cal
Z(\Galg_{\operatorname{sc}}^*)$.
In Table~\ref{tab:bon}, we recall some information that we need. For more
details, we refer to~\cite{BonLevi}. For the notation in
Table~\ref{tab:bon}, we put
$\mu_{n}=\{z\in\overline{\F}_p^{\times}|z^n=1\}$. 

Hence, using Table~\ref{tab:bon} we then can find the cuspidal Levi
subgroup (and its semisimple rank) associated to every linear character
of $\cal Z(\Galg^*)$ for $\Galg^*$ of type $A_n$, $B_n$, $C_n$, $E_6$
and $E_7$ and $D_{2n+1}$. 
For example, suppose $\Galg$ is of type $A_n$. Then using 
the notation preceding Corollary~\ref{nbcladjoint} , there is an integer
$r$ such that $\Galg=\Galg_r$. Moreover, one has $\Galg_r^*=\Galg_{r'}$
with $r'=(n+1)/r$. Note that $|\cal Z(\Galg_{r'})|=r$. Let $d$ be a divisor
of $r$ and let $\zeta$ be a linear character of $\cal Z(\Galg_{r'})$ of
order $d$. Then $\widehat{\Lalg}_{\zeta}^*$ has semisimple rank equal to 
$\frac{n+1}{d}(d-1)$.

Suppose $\Galg$ is of type $D_{2n}$ and denote by
$\pi:\Galg_{\operatorname{sc}}^*\rightarrow\Galg^*$ the universal cover
of $\Galg^*$ as above. The group $\cal Z(\Galg_{\operatorname{sc}}^*)$
has order $4$ and exponent $2$. Moreover, the three non-trivial
characters of $\cal Z(\Galg_{\operatorname{sc}}^*)$ have distinct
kernel. These kernels are the subgroups of order $2$ of $\cal
Z(\Galg_{\operatorname{sc}}^*)$ denoted by $c_1$, $c_2$ and
$c_3$ in Table~\ref{tab:bon}. Note that if $\ker(\pi)=c_3$ then
$\Galg^*=\operatorname{SO}_{4n}$ and if $\ker(\pi)\in\{c_1,c_2\}$, then
$\Galg^*=\operatorname{HS}_{4n}$. Let $\zeta$ be a non-trivial linear
character of $\cal Z(\Galg^*)$.
Suppose first that $\Galg^*=\Galg_{\operatorname{sc}}^*$. Then,
the corresponding cuspidal Levi $\Lalg_{\zeta}^*$ is  
a cuspidal standard Levi subgroup $\Galg_{\operatorname{sc}}^*$
such that $\zeta$ and $h_{\Lalg_{\zeta}}$ have the same kernel. 
If $\Galg^*=\operatorname{SO}_{4n}$ or $\Galg^*=\operatorname{HS}_{4n}$ , 
then $\cal Z(\Galg^*)$ has order
$2$ and the semisimple rank of the cuspidal Levi associated to 
the non-trivial character of $\cal Z(\Galg^*)$ equals the semisimple rank of
any elements of $\cal L_{\operatorname{min}}(\ker(\pi))$ (in
the group $\Galg_{\operatorname{sc}}^*$).

We now discuss the conditions on $q$ given in the second column
of Table~\ref{tab:nbclass}.  
Suppose that $\cal Z(\Galg^*)$ is cyclic of order $N$. 
Then, using~\cite[Table 1.12.6,\,1.15.2]{sol}, we show that the order of
$H^1(F^{\epsilon*},\cal Z(\Galg^*))$ is the gcd of $N$ and $q-\epsilon$.
If $\cal Z(\Galg^{*})$ is not cyclic (i.e. $\Galg$ is of type $D_{2n}$)
and
if $p\neq 2$, then $H^1(F^{\epsilon *},\cal Z(\Galg^*))=\cal Z(\Galg^*)$; 
see~\cite[Table 1.12.6,\,1.15.2]{sol}.

The result then follows from Theorem~\ref{nbclss}.
\begin{table}
$$\renewcommand{\arraystretch}{1.4}
\begin{array}{c|c|c|c|c}
\textrm{Type of }\Galg &\cal
Z(\Galg)&K&\begin{array}{c}\textrm{ss-rk}(\Lalg)\\
\textrm{for }\Lalg\in\cal
L_{\operatorname{min}}(K)\end{array}&\cal Z(\Lalg)\\
\hline
A_n&\mu_{n+1}&
\begin{array}{c}
\mu_{(n+1)/d}\\
d\mid (n+1)\\
p\nmid d
\end{array}
&\frac{n+1}{d}(d-1)&\mu_d\\
\hline
\begin{array}{c}
B_n\\
p\neq 2
\end{array}&\mu_2&
1
&{\lfloor \frac{n+1}{2}\rfloor}&\mu_2\\
\hline
\begin{array}{c}
C_n\\
p\neq 2
\end{array}&\mu_2&
1
&1&\mu_2\\
\hline
\begin{array}{c}
D_{2n+1}\\
p\neq 2
\end{array}&\mu_4&
\begin{array}{c}
1\\
\mu_2
\end{array}
&
\begin{array}{c}
n+2\\2
\end{array}
&
\begin{array}{c}
\mu_4\\
\mu_2
\end{array}\\
\hline
\begin{array}{c}
D_{2n}\\
p\neq 2
\end{array}&\mu_2\times\mu_2&
\begin{array}{c}
1\\
c_1\\
c_2\\
c_3\\
\end{array}
&
\begin{array}{c}
n+1\\
n\\
n\\
2
\end{array}
&
\begin{array}{c}
\mu_2\times\mu_2\\
\mu_2\\
\mu_2\\
\mu_2
\end{array}\\
\hline
\begin{array}{c}
E_6\\
p\neq 3
\end{array}&\mu_3&
1
&4&\mu_3\\
\hline
\begin{array}{c}
E_7\\
p\neq 2
\end{array}&\mu_2&
1
&3&\mu_2\\
\hline
\end{array}
$$
\caption{$\cal L_{\operatorname{min}}(K)$ for simple simply-connected
groups}
\label{tab:bon}
\end{table}

\end{proof}

\section{Results on semisimple characters}\label{sec:nbsemicar}
Let $\Galg$ be a connected reductive group defined over $\F_q$ (with
Frobenius map $F$) as above and let $(\Galg^*,F^*)$ denote a dual pair
of $(\Galg,F)$.
Write $\cal S$ (resp. $\cal T$) for a set of representatives of semisimple
classes of $\Galg^{*F^*}$ (resp. a set of representatives of
$F^*$-stable semisimple classes of $\Galg^*$). Moreover, we suppose that the
elements of $\cal T$ are $F^*$-stable (which is possible because by
Lang-Steinberg Theorem, we can choose an $F^*$-stable representative in
every $F^*$-stable geometric class of $\Galg^*$).
Put $A_{\Galg^*}(s)=\Cen_{\Galg^*}(s)/\Cen_{\Galg^*}(s)^{\circ}$. Recall
that the classes of $\Galg^{*F^*}$ with representative
$t\in\Galg^{*F^*}$ conjugate to $s$ in $\Galg^*$ are parametrized by the
set of $F^*$-classes of $A_{\Galg^*}(s)$. 
Moreover, $A_{\Galg^*}(s)$ is
abelian, implying $|H^1(F^*,A_{\Galg^*}(s))|=|A_{\Galg^*}(s)^{F^*}|$.
%
Note that there is an injective morphism between $A_{\Galg^*}(s)^{F^*}$
and $H^1(F,\cal Z(\Galg))^\wedge$. Hence $|A_{\Galg^*}(s)^{F^*}|$
divides $|H^1(F,\cal Z(\Galg))|$ and for every divisor $d$ of $|H^1(F,\cal
Z(\Galg))|$, we put 
\begin{equation}\label{eq:td}
\cal T_d=\left\{s\in \cal T\,|\,
d=|A_{\Galg^*}(s)^{F^*}|\right\}.
\end{equation}
For $s\in\cal S$ and $z\in H^1(F,\cal Z(\Galg))$, we set
$\rho_{s,z}=D_{\Galg}(\chi_{s,z})$, where the character $\chi_{s,z}$ is
the constituent of the Gelfand-Graev character $\Gamma_z$ defined in
Equation~(\ref{eq:mulfree}). Put
$$\Irr_s(\Galg^F)=\{\rho_{s,z}\,|\,s\in\cal S,\,z\in  H^1(F,\cal
Z(\Galg))\}.$$
The irreducible characters $\rho_{s,z}$ are the so-called semisimple
characters of $\Galg^F$.

\begin{proposition}\label{nbsschar}
With the above notation, we have
$$|\Irr_s(\Galg^F)|=\sum_{d/| H^1(F,\cal Z(\Galg))|}d^2\,|\cal
T_d|.$$
\end{proposition}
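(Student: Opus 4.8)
The plan is to translate the count of distinct $\rho_{s,z}$ into a count of distinct $\chi_{s,z}$, organize these by rational Lusztig series, and finally regroup the rational semisimple classes into geometric ones. First, the Alvis--Curtis duality $D_{\Galg}$ is an invertible isometry on class functions, so the assignment $\chi_{s,z}\mapsto\rho_{s,z}=D_{\Galg}(\chi_{s,z})$ is injective; hence $\rho_{s,z}=\rho_{s',z'}$ if and only if $\chi_{s,z}=\chi_{s',z'}$, and $|\Irr_s(\Galg^F)|$ equals the number of distinct characters $\chi_{s,z}$ as $(s,z)$ runs over $\cal S\times H^1(F,\cal Z(\Galg))$.

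Next I would partition these by rational Lusztig series. By the Digne--Michel construction~\cite{DM}, each $\chi_{s,z}$ is a constituent of the class function $\chi_s$ and therefore lies in the rational series $\cal E(\Galg^F,s)$; since the series attached to distinct elements of $\cal S$ are pairwise disjoint, the distinct $\chi_{s,z}$ are counted series by series, giving
$$|\Irr_s(\Galg^F)|=\sum_{s\in\cal S}n_s,\qquad n_s:=\bigl|\{\chi_{s,z}\,:\,z\in H^1(F,\cal Z(\Galg))\}\bigr|.$$

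The crux is to show $n_s=|A_{\Galg^*}(s)^{F^*}|$, i.e.\ that the rational series $\cal E(\Galg^F,s)$ contains exactly $|A_{\Galg^*}(s)^{F^*}|$ semisimple characters. Here I would exploit the injective morphism $\iota\colon A_{\Galg^*}(s)^{F^*}\hookrightarrow H^1(F,\cal Z(\Galg))^{\wedge}$ recalled above: the fibres of the map $z\mapsto\chi_{s,z}$ are exactly the cosets of the subgroup $K_s\subseteq H^1(F,\cal Z(\Galg))$ annihilating $\operatorname{im}(\iota)$, so the number of distinct characters in the fixed series equals the index $[H^1(F,\cal Z(\Galg)):K_s]=|\operatorname{im}(\iota)|=|A_{\Galg^*}(s)^{F^*}|$. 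This is the step I expect to be the main obstacle, since it requires unwinding the Digne--Michel parametrization of the $\chi_{s,z}$ and pinning down precisely the pairing between $H^1(F,\cal Z(\Galg))$ and $A_{\Galg^*}(s)^{F^*}$; the scalar products of Proposition~\ref{norm}, which compute the number of common constituents $\cyc{\Gamma_{z_1},\Gamma_{z_2}}_{\Galg^F}=|\{s\in\cal S:z_1z_2^{-1}\in K_s\}|$, provide a useful consistency check.

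Finally I would regroup by geometric classes. For $t\in\cal T$ put $d:=|A_{\Galg^*}(t)^{F^*}|$. The rational classes $s\in\cal S$ inside the geometric class of $t$ are parametrized by the $F^*$-classes of $A_{\Galg^*}(t)$, a set of cardinality $|H^1(F^*,A_{\Galg^*}(t))|=|A_{\Galg^*}(t)^{F^*}|=d$ since $A_{\Galg^*}(t)$ is abelian; moreover abelianness makes the twisted Frobenius on each rational form coincide with $F^*$, so $n_s=d$ for every such $s$. Each geometric class therefore contributes $d\cdot d=d^2$, and summing over $\cal T$ sorted by the value $d=|A_{\Galg^*}(s)^{F^*}|$ yields $|\Irr_s(\Galg^F)|=\sum_{d\mid|H^1(F,\cal Z(\Galg))|}d^2\,|\cal T_d|$, as claimed.
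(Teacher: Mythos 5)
Your proposal follows the same global architecture as the paper's proof: count the semisimple characters one rational Lusztig series at a time, show that the series of $s$ contains exactly $|A_{\Galg^*}(s)^{F^*}|$ of them, and then regroup the rational classes into geometric ones via $|H^1(F^*,A_{\Galg^*}(t))|=|A_{\Galg^*}(t)^{F^*}|$. Your steps 1, 2 and 4 are sound; in particular your observation that abelianness of $A_{\Galg^*}(t)$ makes the twisted Frobenius on each rational form inside a geometric class agree with $F^*$, so that $|A_{\Galg^*}(s)^{F^*}|$ is constant on the geometric class, is exactly what justifies the paper's final equality $\sum_{s\in\mathcal{S}\cap[t]_{\Galg^*}}|A_{\Galg^*}(s)^{F^*}|=|A_{\Galg^*}(t)^{F^*}|^2$ (the paper leaves this silent).

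The gap is precisely where you flag it, and it is genuine: your assertion that the fibres of $z\mapsto\chi_{s,z}$ are the cosets of the annihilator $K_s$ of $\operatorname{im}(\iota)$ is not a lemma you can lean on but is \emph{equivalent} to the statement $n_s=[H^1(F,\mathcal{Z}(\Galg)):K_s]=|A_{\Galg^*}(s)^{F^*}|$ you are trying to prove, so as written the crux is assumed rather than established. The injectivity of $\iota$ recalled before the proposition only yields that $|A_{\Galg^*}(s)^{F^*}|$ divides $|H^1(F,\mathcal{Z}(\Galg))|$; it gives no control on the fibres. Nor does the consistency check via Proposition~\ref{norm} substitute for a proof: the identity $\cyc{\Gamma_{z_1},\Gamma_{z_2}}_{\Galg^F}=|\{s\in\mathcal{S}:z_1z_2^{-1}\in K_s\}|$ already presupposes the coset structure of the fibres. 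The paper closes exactly this hole by a different mechanism: embed $\Galg$ in a group $\widetilde{\Galg}$ with connected center and the same derived subgroup, lift $s$ to $\widetilde{s}$ with $i^*(\widetilde{s})=s$, note that $\rho_{s,1}$ is a constituent of $\Res_{\Galg^F}^{\widetilde{\Galg}^F}(\rho_{\widetilde{s}})$ by~\cite[14.49]{DM}, that this restriction is multiplicity free by~\cite{LusDis}, and that the inertia group of $\rho_{s,1}$ in $\widetilde{\Galg}^F$ has index $|A_{\Galg^*}(s)^{F^*}|$; Clifford theory then yields exactly $|A_{\Galg^*}(s)^{F^*}|$ constituents, which are the semisimple characters of the series. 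To repair your argument, either import this regular-embedding computation, or cite a precise reference establishing the surjection $H^1(F,\mathcal{Z}(\Galg))\twoheadrightarrow\Irr(A_{\Galg^*}(s)^{F^*})$, $z\mapsto\vartheta$ with $\chi_{s,z}$ determined by $\vartheta$ (the fact the paper itself invokes later, in the proof of Proposition~\ref{relativeprime}); "unwinding the Digne--Michel parametrization" is not a detail but the entire content of the proposition.
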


\begin{proof}
As explained in~\cite[p.\,139]{DM}, we embed $\Galg$ in a connected
reductive group with connected center $\widetilde{\Galg}$ with the same
derived subgroup and such that $\Galg$ is normal in $\widetilde{\Galg}$.
We extend $F$ to $\widetilde{\Galg}$ (denoted by the same symbol).
The inclusion $\Galg\subseteq \widetilde{\Galg}$ induces a surjective
$F^*$-equivariant morphism $i^*:\widetilde{\Galg}^*\rightarrow\Galg^*$.
For $s\in\cal S$, there is an $F^*$-stable semisimple $\widetilde{s}$
of $\widetilde{\Galg}^*$ such that $i^*(\widetilde{s})=s$.
Write $\rho_{\widetilde{s}}$ for the semisimple character of
$\widetilde{\Galg}^F$ corresponding to $s$ (this character is
unique because $H^1(F,\cal Z(\widetilde{\Galg}))$ is trivial). Then
by~\cite[14.49]{DM}, the character $\rho_{s,1}$ is a constituent
of $\Res_{\Galg^F}^{\widetilde{\Galg}^F}(\rho_{\widetilde{s}})$.
Moreover, the inertial group $\widetilde{\Galg}^F(s)$
of $\rho_{s,1}$ in $\widetilde{\Galg}^F$ is such that
$\widetilde{\Galg}^F/\widetilde{\Galg}^F(s)\simeq
A_{\Galg^*}(s)^{F^*}$. Thus by Clifford theory, since
$\Res_{\Galg^F}^{\widetilde{\Galg}^F}(\rho_{\widetilde{s}})$
is multiplicity free (see~\cite{LusDis}), we deduce that
$\Res_{\Galg^F}^{\widetilde{\Galg}^F}(\rho_{\widetilde{s}})$ has $|
A_{\Galg^*}(s)^{F^*}|$ constituents. It follows that
$$|\Irr_s(\Galg^F)|=\sum_{s\in\cal S}|A_{\Galg^*}(s)^{F^*}|
=
\sum_{t\in\cal T}\sum_{s\in\cal S\cap[t]_{\Galg^*}}|A_{\Galg^*}(s)^{F^*}|
=\sum_{t\in\cal T}|A_{\Galg^*}(t)^{F^*}|^2
.$$
The result follows.
\end{proof}

\begin{proposition}\label{semiprime}
We keep the same notation as above and we suppose that $p$ is a good
prime for $\Galg$.
Suppose that $H^1(F,\cal Z(\Galg))$ has prime order $\ell$. Let $\zeta$
be a non trivial linear character of $H^1(F,\cal Z(\Galg))$. Write
$\Lalg$ for its associated cuspidal Levi subgroup. Then we have
$$|\Irr_{s}(\Galg^F)|=|\Zz(\Galg)^{\circ
F}|\left(q^l+(\ell^2-1)q^{l-(\operatorname{ss-rk}(\Lalg))}\right),$$
where $l$ denotes the semisimple rank of $\Galg$. In particular, in
Table~\ref{tab:ssprime}, we give the number of semisimple characters
of $\Galg^F$ for simple groups
$\Galg$ with $\Zz(\Galg))^F$ of 
prime order. For the notation of Table~\ref{tab:ssprime}, we put
$m=\operatorname{gcd}(r,q-\epsilon)$.
\begin{table}
$$
\renewcommand{\arraystretch}{1.4}
\begin{array}{ll|l}
\Galg_{\operatorname{sc}}^F&&|\Irr_s(\Galg_{\operatorname{sc}}^F)|\\
\hline
^{\epsilon}A_n^r(q)& m\ \textrm{prime}&q^n+(m^2-1)q^{\frac{n+1}{m}-1}\\ 
B_n(q)&q=1\mod 2&q^n+3q^{\lfloor n/2\rfloor}
\\
C_n(q)&q=1\mod 2&q^n+3q^{n-1}
\\
^{\epsilon}D_{2n+1}(q)&q=-\epsilon\mod 4&q^{2n+1}+3q^{2n-1}\\
\operatorname{SO}_{2n}^{\epsilon}(q)&q=1\mod 2&q^{n}+3q^{n-2}\\
\operatorname{HS}_{4n}(q)&q=1\mod 2&q^{2n}+3q^{n}\\
^{\epsilon}E_6(q),\,\mbox{\scriptsize $p\neq 2$}&q=\epsilon\mod 3&q^6+8q^2\\
E_7(q),\,\mbox{\scriptsize $p\neq 3$}&q=1\mod 2&q^7+3q^4
\end{array}
$$
\caption{Number of semisimple characters.}
\label{tab:ssprime}
\end{table}
\end{proposition}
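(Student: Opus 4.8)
The plan is to extract everything from the two Gelfand--Graev scalar products computed by Proposition~\ref{norm}, combined with the combinatorial identity of Proposition~\ref{nbsschar}. Since $\ell$ is prime, the divisors of $|H^1(F,\cal Z(\Galg))|$ are $1$ and $\ell$, so Proposition~\ref{nbsschar} reads $|\Irr_s(\Galg^F)|=|\cal T_1|+\ell^2|\cal T_\ell|$; and because a geometric class in $\cal T_d$ splits into exactly $d$ rational classes of $\cal S$, each with $|A_{\Galg^*}(\cdot)^{F^*}|=d$, one also has $|\cal S|=|\cal T_1|+\ell|\cal T_\ell|$. Thus it suffices to determine $|\cal S|$ and $|\cal T_1|$ and solve the resulting linear system in the unknowns $|\cal T_1|,|\cal T_\ell|$. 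I would first record the one simplification forced by primality: a group of prime order is cyclic, so every non-trivial $\zeta\in H^1(F,\cal Z(\Galg))^{\wedge}$ is faithful and has kernel exactly $\cal L(\cal Z(\Galg))$ as a character of $\cal Z(\Galg)$; hence all $\ell-1$ non-trivial characters share one cuspidal Levi $\Lalg$, and I write $r=\operatorname{ss-rk}(\Lalg)$.

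Next I evaluate the scalar products on $\Galg^F$ via Proposition~\ref{norm}. For $z_1=z_2$ the trivial character contributes $q^l$ (its Levi is a maximal torus) and each of the $\ell-1$ non-trivial ones contributes $q^{l-r}$, giving $|\cal S|=\cyc{\Gamma_1,\Gamma_1}_{\Galg^F}=|\Zz(\Galg)^{\circ F}|\bigl(q^l+(\ell-1)q^{l-r}\bigr)$. For $z_1\neq z_2$, put $w=z_2z_1^{-1}\neq1$; the summand becomes $\overline{\zeta(z_1)}\zeta(z_2)=\zeta(w)$, and the character-sum identity $\sum_{\zeta\neq1}\zeta(w)=-1$ yields $\cyc{\Gamma_{z_1},\Gamma_{z_2}}_{\Galg^F}=|\Zz(\Galg)^{\circ F}|\bigl(q^l-q^{l-r}\bigr)$.

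The crux is to identify this second product with $|\cal T_1|$. Since each $\Gamma_z=\sum_{s\in\cal S}\chi_{s,z}$ is multiplicity free (Equation~(\ref{eq:mulfree})) and the Jordan label $s$ of $\chi_{s,z}$ is intrinsic, one has $\cyc{\Gamma_{z_1},\Gamma_{z_2}}_{\Galg^F}=|\{s\in\cal S:\chi_{s,z_1}=\chi_{s,z_2}\}|$. For a fixed $s$ the number of distinct characters $\chi_{s,z}$, as $z$ runs over $H^1(F,\cal Z(\Galg))$, is $|A_{\Galg^*}(s)^{F^*}|$ --- this is precisely the count established in the proof of Proposition~\ref{nbsschar} from the multiplicity-free restriction of $\rho_{\widetilde{s}}$ along $\Galg^F\subseteq\widetilde{\Galg}^F$. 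As $\ell$ is prime this number is $1$ or $\ell$: if $1$, then $\chi_{s,z_1}=\chi_{s,z_2}$ automatically; if $\ell$, then $z\mapsto\chi_{s,z}$ is a bijection on an $\ell$-element set, so the equality forces $z_1=z_2$. Hence for $z_1\neq z_2$ the coincidence occurs exactly for the $s$ with $|A_{\Galg^*}(s)^{F^*}|=1$, i.e.\ for the $|\cal T_1|$ rational classes arising from the geometric classes in $\cal T_1$, and therefore $|\cal T_1|=|\Zz(\Galg)^{\circ F}|\bigl(q^l-q^{l-r}\bigr)$. I expect this combinatorial reading, and in particular keeping straight the passage between the rational classes $\cal S$ and the geometric classes $\cal T$, to be the step demanding the most care.

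Solving the system then gives $\ell|\cal T_\ell|=|\cal S|-|\cal T_1|=|\Zz(\Galg)^{\circ F}|\,\ell\,q^{l-r}$, so $|\cal T_\ell|=|\Zz(\Galg)^{\circ F}|q^{l-r}$, and substituting into $|\Irr_s(\Galg^F)|=|\cal T_1|+\ell^2|\cal T_\ell|$ produces $|\Zz(\Galg)^{\circ F}|\bigl(q^l+(\ell^2-1)q^{l-r}\bigr)$, as claimed. For Table~\ref{tab:ssprime} I would specialize group by group, using that the listed groups are semisimple with finite centre so that $|\Zz(\Galg)^{\circ F}|=1$, reading the prime $\ell=|H^1(F,\cal Z(\Galg))|$ off the congruence on $q$ via~\cite{sol}, and taking $\operatorname{ss-rk}(\Lalg)$ from Table~\ref{tab:bon} (invoking Table~\ref{tab:duality} and the duality-invariance of the semisimple rank where it is convenient to compute on $\Galg^*$). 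For instance, for $^{\epsilon}E_6(q)$ with $q\equiv\epsilon\bmod 3$ one has $\ell=3$, $l=6$ and $\operatorname{ss-rk}(\Lalg)=4$, recovering $q^6+8q^2$.
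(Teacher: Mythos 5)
Your argument is correct, and it diverges from the paper's proof at one genuine point. Both proofs set up the same linear system from Proposition~\ref{nbsschar}: with $\ell$ prime, $|\Irr_s(\Galg^F)|=|\mathcal{T}_1|+\ell^2|\mathcal{T}_\ell|$ and $|\mathcal{S}|=|\mathcal{T}_1|+\ell|\mathcal{T}_\ell|$, and both obtain $|\mathcal{S}|=|\Zz(\Galg)^{\circ F}|\bigl(q^l+(\ell-1)q^{l-r}\bigr)$ from the diagonal Gelfand--Graev norm (the paper via Theorem~\ref{nbclss}, you directly via Proposition~\ref{norm}; these are the same computation). The difference is the second equation: the paper closes the system by citing \cite[14.42]{DM} for the count $|\mathcal{T}|=|\Zz(\Galg)^{\circ F}|q^l$ of $F^*$-stable geometric semisimple classes, whereas you extract $|\mathcal{T}_1|$ from the off-diagonal product $\langle\Gamma_{z_1},\Gamma_{z_2}\rangle_{\Galg^F}$ with $z_1\neq z_2$, identifying it with $\#\{s\in\mathcal{S}\,:\,\chi_{s,z_1}=\chi_{s,z_2}\}$ and counting fibers of $z\mapsto\chi_{s,z}$. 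That identification is sound, but note its dependencies: you need disjointness of the rational Lusztig series attached to distinct $s\in\mathcal{S}$ (so no cross coincidences $\chi_{s,z_1}=\chi_{s',z_2}$ with $s\neq s'$), and the fact that $\{\chi_{s,z}\,:\,z\}$ has exactly $|A_{\Galg^*}(s)^{F^*}|$ elements --- a fact the paper only makes explicit later, via the surjection $\omega_s$ in the proof of Proposition~\ref{relativeprime}; indeed your off-diagonal value $\langle\Gamma_{z_1},\Gamma_{z_2}\rangle_{\Galg^F}=|\Zz(\Galg)^{\circ F}|\bigl(q^l-q^{l-r}\bigr)$ is exactly the computation the paper performs there. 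What your route buys is self-containedness: it avoids the external input \cite[14.42]{DM}, uses the full strength of Proposition~\ref{norm} (diagonal and off-diagonal entries), and as a by-product recovers $|\mathcal{T}_1|+|\mathcal{T}_\ell|=|\Zz(\Galg)^{\circ F}|q^l$, a consistency check against the cited count. What the paper's route buys is brevity and lighter prerequisites at this point of the development. Your final algebra agrees (your $|\mathcal{T}_1|=|\Zz(\Galg)^{\circ F}|(q^l-q^{l-r})$ and $|\mathcal{T}_\ell|=|\Zz(\Galg)^{\circ F}|q^{l-r}$ coincide with what the paper's formulas yield), and your table specialization --- $|\Zz(\Galg)^{\circ F}|=1$ for simple groups, $\ell$ read off the congruences via \cite{sol}, and $\operatorname{ss-rk}(\Lalg)$ from Table~\ref{tab:bon} transported through the duality of Table~\ref{tab:duality} --- is the paper's.
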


\begin{proof}
We denote by $\cal T_1$ and $\cal T_{\ell}$ the sets as defined in
Equation~(\ref{eq:td}). We have 
$|\cal T|=|\cal T_1|+|\cal T_{\ell}|$ and $|\cal
S|=|\cal T_1|+\ell|\cal T_{\ell}|$ implying
$$|\cal T_1|=\frac{1}{\ell -1}(\ell|\cal T|-|\cal
S|)\quad\textrm{and}\quad |\cal T_{\ell}|=\frac{1}{\ell -1}(|\cal S|-
|\cal T|).$$
Furthermore, from~\cite[14.42]{DM} we deduce that $|\cal
T|=|\Zz(\Galg)^{\circ F}|q^l$. Moreover, since $\ell$ is prime, all
non trivial linear characters of $H^1(F,\cal Z(\Galg))$ are
faithful on $H^1(F,\cal Z(\Galg))$. Their corresponding characters of
$\cal Z(\Galg)$ then have the same kernel (equal to $\cal L(\cal
Z(\Galg))$).
Thus, they are associated to a same cuspidal Levi subgroup $\Lalg$,
which is the standard Levi of $\cal L_{\operatorname{min}}(\cal L(\cal
Z(\Galg)))$.
Thanks to Theorem~\ref{nbclss} we deduce that $$|\cal
S|=|\Zz(\Galg)^{\circ
F}|\left(q^{l}+(\ell-1)q^{l-(\operatorname{ss-rk}(\Lalg))}\right).$$
Now, using Proposition~\ref{nbsschar}, we obtain
$$
\begin{array}{lcl}
|\Irr_s(\Galg^F)|&=&|\cal T_1|+\ell^2|\cal T_{\ell}|\\
&=&(\ell+1)|\cal S|-\ell|\cal T|\\
&=&|\Zz(\Galg)^{\circ F}|\left(
(\ell+1)q^l+(\ell^2-1)q^{l-(\operatorname{ss-rk}(\Lalg))}-\ell
q^l\right)\\
&=& |\Zz(\Galg)^{\circ
F}|\left(q^l+(\ell^2-1)q^{l-(\operatorname{ss-rk}(\Lalg))}\right).
\end{array}
$$
Now, Table~\ref{tab:ssprime} follows from Table~\ref{tab:bon}.
However, note that for $\Galg=\operatorname{SO}_{2n}$, we have to
distinguish whether $n$ is even or not. If $n=2k+1$, then the number
of semisimple characters of $\operatorname{SO}_{4k+2}^{\epsilon}(q)$
is $q^{2k+1}+3q^{2k-1}=q^n+3q^{n-2}$. If $n=2k$, then the number of
semisimple characters of $\operatorname{SO}_{4k}^{\epsilon}(q)$ is
$q^{2k}+3q^{2k-2}=q^n+3q^{n-2}$.
\end{proof}

\section{Characters of $p'$-order in Borel subgroups}\label{sec:borel}

\subsection{Formula for the number of $p'$-characters}
In this section, we keep the same notation as above. In particular,
$\Talg$ denotes a maximal $F$-stable torus of $\Galg$ contained in an
$F$-stable Borel subgroup $\Balg$ of $\Galg$. We consider the group
$$\Balg_0=\Ualg_1\rtimes \Talg,$$
where $\Ualg_1=\Balg/\Ualg_0$ (see~\S\ref{ggc} for the notation). Note that
$\Balg_0$ is $F$-stable and $\Balg_0^F=\Ualg_1^F\rtimes\Talg^F$.
Moreover, the set $\Irr_{p'}(\Balg^F)$ is in bijection with the set
$\Irr(\Balg_0^F)$; see~\cite[Lemma 4]{Br6}.
As in the proof of Proposition~\ref{nbsschar}, we consider 
$\widetilde{\Galg}$ a connected reductive group with  connected center 
containing $\Galg$ and such that they have the same derived subgroup.
We denote by $\widetilde{\Talg}$ the unique $F$-stable maximal torus
of $\widetilde{\Galg}$ containing $\Talg$.
We denote by $\Omega$ and $\widetilde{\Omega}$ the sets of
$\Talg^F$-orbits and $\widetilde{\Talg}^F$-orbits on $\Irr(\Ualg_1^F)$,
respectively. As in Equation~(\ref{eq:u1}), we denote by $\cal O$ the
set of $F$-orbits on $\Delta$. Moreover, for every $\omega\in\cal O$, we
fix a non-trivial character $\phi_{\omega}$ of $\Xalg_{\omega}^F$ (for
the notation, see Equation~(\ref{eq:u1})).
For $J\subseteq\cal O$, we set
$$\phi_J=1_{\overline{J}}\otimes\prod_{\omega\in J}\phi_{\omega},$$
where $1_{\overline{J}}=\prod_{\omega\notin J}1_{\Xalg_{\omega}^F}$. 
Then by~\cite[2.9,\,8.1.2]{Carter2}, the set $\{\phi_J\,|\,J\subseteq \cal O\}$
is a set of representatives of $\widetilde{\Omega}$.

\begin{proposition}
We keep the notation as above. For every $J\subseteq\mathcal O$, 
we denote by 
$\Omega_J$ (resp. $\Omega_{J,1}$) the element of $\widetilde{\Omega}$
(resp. $\Omega$) containing $\phi_J$. Moreover, we set
$n_J=|\Omega_J|/|\Omega_{J,1}|$.
Then
$$|\Irr_{p'}(\Balg^F)|=\sum_{J\subseteq \cal O}n_J
|\Cen_{\Talg^F}(\phi_J)|.$$
\label{nbborel}
\end{proposition}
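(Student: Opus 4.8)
The plan is to transport the count to the group $\Balg_0^F=\Ualg_1^F\rtimes\Talg^F$. By~\cite[Lemma 4]{Br6} there is a bijection between $\Irr_{p'}(\Balg^F)$ and $\Irr(\Balg_0^F)$, so it suffices to compute $|\Irr(\Balg_0^F)|$. Here the normal subgroup $\Ualg_1^F$ is abelian (since $\Ualg_1\simeq\prod_{\alpha\in\Delta}\Xalg_{\alpha}$) and the semidirect product is split, which makes the situation amenable to the little-group method (Clifford theory together with Gallagher's theorem). First I would let $\Talg^F$ act on $\Irr(\Ualg_1^F)$ and, for a $\Talg^F$-orbit $O\in\Omega$ with representative $\lambda$, observe that $\lambda$ is invariant under its inertia subgroup $\Cen_{\Talg^F}(\lambda)$; because $\Ualg_1^F$ is abelian and $\Balg_0^F$ splits over it, $\lambda$ extends to $\Ualg_1^F\rtimes\Cen_{\Talg^F}(\lambda)$. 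Gallagher's theorem then puts the irreducible characters of this inertia group lying over $\lambda$ in bijection with $\Irr(\Cen_{\Talg^F}(\lambda))$, and induction to $\Balg_0^F$ yields a bijection between the irreducible characters of $\Balg_0^F$ lying over the orbit $O$ and $\Irr(\Cen_{\Talg^F}(\lambda))$.

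Since $\Talg^F$ is abelian, so is each inertia subgroup $\Cen_{\Talg^F}(\lambda)$, whence $|\Irr(\Cen_{\Talg^F}(\lambda))|=|\Cen_{\Talg^F}(\lambda)|$. Summing over all $\Talg^F$-orbits gives
$$|\Irr(\Balg_0^F)|=\sum_{O\in\Omega}|\Cen_{\Talg^F}(\lambda_O)|,$$
where $\lambda_O$ is any representative of $O$, the stabilizer order being independent of the choice. It then remains to reorganize this sum over the finer partition $\Omega$ into a sum over the coarser partition $\widetilde{\Omega}$ indexed by the $\phi_J$.

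The key step, and the main point to check carefully, is the comparison of the $\Talg^F$-orbits with the $\widetilde{\Talg}^F$-orbits. Fix $J\subseteq\cal O$ and consider the $\widetilde{\Talg}^F$-orbit $\Omega_J$ containing $\phi_J$. Because $\widetilde{\Talg}^F$ is abelian, $\Talg^F$ is normal in it, so $\widetilde{\Talg}^F$ permutes transitively the $\Talg^F$-orbits contained in $\Omega_J$; hence these suborbits all have the same cardinality $|\Omega_{J,1}|$, and their number is $|\Omega_J|/|\Omega_{J,1}|=n_J$. A short computation, again using that $\widetilde{\Talg}^F$ is abelian, shows that for $\psi={}^t\phi_J$ with $t\in\widetilde{\Talg}^F$ one has $\Cen_{\Talg^F}(\psi)=\Cen_{\Talg^F}(\phi_J)$, so every $\Talg^F$-orbit inside $\Omega_J$ contributes the same value $|\Cen_{\Talg^F}(\phi_J)|$ to the sum of the previous paragraph. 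Therefore the $n_J$ orbits inside $\Omega_J$ together contribute $n_J|\Cen_{\Talg^F}(\phi_J)|$, and since $\{\phi_J\mid J\subseteq\cal O\}$ represents $\widetilde{\Omega}$, summing over $J$ gives
$$|\Irr(\Balg_0^F)|=\sum_{J\subseteq\cal O}n_J|\Cen_{\Talg^F}(\phi_J)|.$$
Combined with the bijection $\Irr_{p'}(\Balg^F)\leftrightarrow\Irr(\Balg_0^F)$ this is the desired formula. The hard part is precisely this last bookkeeping: verifying that each $\widetilde{\Talg}^F$-orbit splits into exactly $n_J$ equal-sized $\Talg^F$-orbits sharing a common $\Talg^F$-stabilizer, which is what makes the contribution factor as $n_J|\Cen_{\Talg^F}(\phi_J)|$.
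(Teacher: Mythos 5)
Your proposal is correct and takes essentially the same route as the paper's proof: reduction to $\Irr(\Balg_0^F)$ via \cite[Lemma 4]{Br6}, the little-group/Clifford count (extension of each linear $\lambda$ to its inertia group $\Ualg_1^F\rtimes\Cen_{\Talg^F}(\lambda)$, then Gallagher) yielding $|\Cen_{\Talg^F}(\lambda)|$ characters above each $\Talg^F$-orbit, and the decomposition of each $\widetilde{\Talg}^F$-orbit $\Omega_J$ into $n_J$ equal-sized $\Talg^F$-orbits with equal stabilizer orders. Your observation that the stabilizers are literally equal (by commutativity of $\widetilde{\Talg}^F$) slightly sharpens the paper's statement that they are conjugate, but the argument is otherwise the same.
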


\begin{proof}
First remark that $n_J$ is an integer. Indeed,
since $\Talg^F\subseteq\widetilde{\Talg}^F$, we deduce that $\Omega_J$
is a disjoint union of $\Talg^F$-orbits. In particular, there is
$k$ such that
\begin{equation}\label{eq:omegaj}
\Omega_J=\bigsqcup_{i=1}^k\Omega_{J,i},
\end{equation}
where $\Omega_{J,i}\in\Omega$ (the notation is chosen such that
$\phi_J=\phi_{J,1}\in\Omega_{J,1}$). 
Moreover, for every $1\leq i\leq k$,
$|\Omega_{J,i}|=|\Omega_{J,1}|$ because $\Omega_{J,i}$ and
$\Omega_{J,1}$ are conjugate by an element of $\widetilde{\Talg}^F$.
Then $|\Omega_{J,1}|$ divides $|\Omega_J|$ and $n_J=k$.
For $1\leq i\leq n_J$, fix $t_i\in\widetilde{\Talg}^F$ such that
$\phi_{J,i}={}^{t_i}\phi_{J,1}\in\Omega_{J,i}$ and denote by
$\Cen_{\Talg^F}(\phi_{J,i})$ the stabilizer of $\phi_{J,i}$ in $\Talg^F$.
Then the inertial subgroup $I_{J,i}$ of $\phi_{J,i}$ in $\Balg_0^F$ is
$\Ualg_1^F\rtimes \Cen_{\Talg^F}(\phi_{J,i})$. Moreover, since
$\Ualg_1^F$ is abelian, we can extend $\phi_{J,i}$ to $I_{J,i}$ setting
$\widetilde{\phi}_{J,i}(ut)=\phi_{J,i}(u)$ for $u\in\Ualg_1^F$ and 
$t\in\Cen_{\Talg^F}(\phi_{J,i})$. Then, by Clifford theory, the
characters of $\Balg_0^F$ such that $\phi_{J,i}$ is a constituent of
their restrictions to $\Ualg_1^F$ are exactly the irreducible
characters $\Ind_{I_{J,i}}^{\Balg_0^F}(\widetilde{\phi}_{J,i}\otimes
\psi)$ with $\psi\in\Irr(\Cen_{\Talg^F}(\phi_{J,i}))$. There are
$|\Cen_{\Talg^F}(\phi_{J,i})|$ such characters. Hence, we deduce 
$$|\Irr(\Balg_0^F)|=\sum_{J\subseteq\cal
O}\sum_{i=1}^{n_J}|\Cen_{\Talg^F}(\phi_{J,i})|.$$
Furthermore, we have
$|\Cen_{\Talg^F}(\phi_{J,i})|=|{}^{t_i}\Cen_{\Talg^F}(\phi_{J,1})|$.
The result follows.
\end{proof}
For $J\subseteq\cal O$, we define
\begin{equation}
m(J)=\bigsqcup_{\omega\in J}\omega.
\label{Jlevi}
\end{equation}
Note that $m(J)\subseteq \Delta$ and $F(m(J))=m(J)$.

\begin{lemma}
We keep the notation as above. For $J\subseteq\cal O$, we associate to
$\phi_J$ the $F$-stable standard Levi subgroup $\Lalg_{m(J)}$ where
$m(J)$ is the subset of $\Delta$ defined in Relation~(\ref{Jlevi}). Then we
have
$$n_J= |H^1(F,\cal Z(\Lalg_{m(J)}))|\quad\textrm{and}\quad
|\Cen_{\Talg^F}(\phi_{J})|=n_J|\Zz(\Galg)^{\circ F}|\prod_{\omega\in \cal O\backslash J}(q^{|\omega|}-1)
,$$
where $n_J$ is the integer defined in Proposition~\ref{nbborel}.
\label{ordcent}
\end{lemma}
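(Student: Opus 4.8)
The plan is to prove the two identities separately, both by applying the orbit theory of regular characters to the Levi subgroup $\Lalg:=\Lalg_{m(J)}$ and then performing an order computation for $\Talg^{F}$. For the value of $n_J$, I would first note that $m(J)\subseteq\Delta$ is exactly the set of simple roots of $\Lalg$, so the simple root subgroups appearing in the unipotent radical of the Borel $\Balg\cap\Lalg$ are precisely the $\Xalg_{\omega}$ with $\omega\in J$. Thus, modulo its non-simple part, $\phi_J$ projects to a character that is non-trivial on every factor $\Xalg_{\omega}^{F}$, $\omega\in J$; in other words $\phi_J$ is a regular character for $\Lalg$ in the sense of Section~\ref{ggc}. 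I would then introduce the Levi $\widetilde{\Lalg}:=\widetilde{\Lalg}_{m(J)}$ of $\widetilde{\Galg}$: since $\widetilde{\Galg}$ has connected centre, so does $\widetilde{\Lalg}$, and $H^1(F,\cal Z(\widetilde{\Lalg}))$ is trivial. As $\widetilde{\Lalg}$ and $\Lalg$ have the same derived group, they have the same unipotent radical, hence the same regular characters; applying~\cite[14.28]{DM} to $\widetilde{\Lalg}$ shows these form a single $\widetilde{\Talg}^{F}$-orbit, which is therefore $\Omega_J$ and equals the set of all characters of $\Ualg_1^{F}$ non-trivial on $\Xalg_{\omega}^{F}$ for $\omega\in J$ and trivial for $\omega\notin J$. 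Applying~\cite[14.28]{DM} to $\Lalg$ instead, this same set breaks into $|H^1(F,\cal Z(\Lalg))|$ orbits under $\Talg^{F}$; since by Proposition~\ref{nbborel} it is the disjoint union of $n_J$ equal-sized $\Talg^{F}$-orbits, I conclude $n_J=|H^1(F,\cal Z(\Lalg_{m(J)}))|$.

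For the centraliser formula, I would argue by orbit--stabiliser. Since $\Omega_J$ is a union of $n_J$ orbits each of size $|\Omega_{J,1}|=|\Talg^{F}|/|\Cen_{\Talg^{F}}(\phi_J)|$, one gets
$$|\Cen_{\Talg^{F}}(\phi_J)|=\frac{n_J\,|\Talg^{F}|}{|\Omega_J|}.$$
From the description of $\Omega_J$ above, $|\Omega_J|=\prod_{\omega\in J}(q^{|\omega|}-1)$, as each $\Xalg_{\omega}^{F}$ has order $q^{|\omega|}$ and only the characters non-trivial on the factors indexed by $J$ occur. It then remains to prove
$$|\Talg^{F}|=|\Zz(\Galg)^{\circ F}|\prod_{\omega\in\cal O}(q^{|\omega|}-1),$$
since substituting this identity and the value of $|\Omega_J|$ leaves exactly $n_J\,|\Zz(\Galg)^{\circ F}|\prod_{\omega\in\cal O\backslash J}(q^{|\omega|}-1)$, as claimed.

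To obtain the order of $\Talg^{F}$, I would set $\overline{\Talg}=\Talg/\Zz(\Galg)^{\circ}$ and take $F$-fixed points in $1\to\Zz(\Galg)^{\circ}\to\Talg\to\overline{\Talg}\to1$, giving
$$1\to\Zz(\Galg)^{\circ F}\to\Talg^{F}\to\overline{\Talg}^{F}\to H^1(F,\Zz(\Galg)^{\circ}).$$
The last term vanishes by the Lang--Steinberg theorem, as $\Zz(\Galg)^{\circ}$ is connected, so $|\Talg^{F}|=|\Zz(\Galg)^{\circ F}|\,|\overline{\Talg}^{F}|$. Since $X(\overline{\Talg})\otimes\Q=\Q\Delta$ and $F$ acts there as $q\sigma$, where $\sigma$ is the permutation of $\Delta$ induced by $F$ with orbit set $\cal O$, a cycle-by-cycle determinant evaluation yields $|\overline{\Talg}^{F}|=|\det_{\Q\Delta}(q\sigma-1)|=\prod_{\omega\in\cal O}(q^{|\omega|}-1)$, which finishes the computation. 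I expect the only delicate point to be the identification in the first paragraph, namely that $\Omega_J$ is precisely the unique $\widetilde{\Talg}^{F}$-orbit of regular characters of $\Lalg$ and that its $\Talg^{F}$-decomposition is controlled by $\cal Z(\Lalg)$ and not by $\cal Z(\Galg)$; the remaining steps are routine counting.
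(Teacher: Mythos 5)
Your proof is correct and takes essentially the same route as the paper's: the same orbit--stabiliser count with $|\Omega_J|=\prod_{\omega\in J}(q^{|\omega|}-1)$ and $|\Talg^F|=|\Zz(\Galg)^{\circ F}|\prod_{\omega\in\mathcal{O}}(q^{|\omega|}-1)$, and the same key step of applying~\cite[14.28]{DM} to the Levi $\Lalg_{m(J)}$ through the $\Talg^F$-equivariant extension map $\vartheta\mapsto 1_{\overline{J}}\otimes\vartheta$ to obtain $n_J=|H^1(F,\mathcal{Z}(\Lalg_{m(J)}))|$. The only difference is cosmetic: where the paper cites Carter~\cite[2.9, 8.1.2]{Carter2} for the order of $\Talg^F$ and the description of $\Omega_J$, you re-derive these facts yourself (via Lang--Steinberg and the determinant evaluation $|\overline{\Talg}^F|=|\det(q\sigma-1)|$, and via~\cite[14.28]{DM} applied to the connected-centre Levi $\widetilde{\Lalg}_{m(J)}$).
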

\begin{proof}
Recall that $\Omega_J$ (resp. $\Omega_{J,1}$) is the
$\widetilde{\Talg}^F$-orbit (resp. $\Talg^F$-orbit) of $\phi_J$.
By Equation~(\ref{eq:omegaj}), one has
$$|\Omega_J|=n_J|\Omega_{J,1}|.$$
Moreover, as explained in the proof of~\cite[8.1.2]{Carter2}, we have 
$|\Omega_J|=\prod_{\omega\in J}(q^{\omega}-1)$. It then follows that
$$|\Cen_{\Talg^F}(\phi_J)|=n_J\frac{|\Talg^F|}{\prod_{\omega\in
J}(q^{|\omega|}-1)}.$$
Furthermore, by~\cite[2.9]{Carter2}, we have $|\Talg^F|=|\Zz(\Galg)^{\circ
F}|\prod_{\omega\in\cal O}(q^{|\omega|}-1)$. Hence we deduce
$$|\Cen_{\Talg^F}(\phi_J)|=n_J|\Zz(\Galg)^{\circ F}|\prod_{\omega\in\cal
O\backslash J}(q^{|\omega|}-1).$$
Let $\Lalg_{m(J)}$ be the standard $F$-stable Levi subgroup of $\Galg$
corresponding to the subset of simple roots $m(J)$. 
Denote by $\Balg_{m(J)}\subseteq\Balg$ an $F$-stable Borel subgroup
of $\Lalg_{m(J)}$ and by $\Ualg_{m(J)}$ the unipotent radical
of $\Balg_{m(J)}$. The set $m(J)$ is the set of simple roots
of $\Lalg_{m(J)}$ associated to $\Balg_{m(J)}$. In particular,
Equation~(\ref{eq:u1}) applied to the connected reductive group
$\Lalg_{m(J)}$ gives
$$\Ualg_{1,m(J)}^F=\prod_{\omega\in
J}\Xalg_{\omega}^F.$$
We denote by $\phi_J'$ the restriction of $\phi_J$ to
$\Ualg_{1,m(J)}^F$.
Then $\phi_J'\in\Irr(\Ualg_{1,m(J)}^F)$ and the map
$\Irr(\Ualg_{1,m(J)}^F)\rightarrow \Irr(\Ualg_{1}^F),\,\vartheta\mapsto
1_{\overline{J}}\otimes \vartheta$, is $\Talg^F$-equivariant.
Moreover, note that $\phi_{J}'$ is a regular character of $\Ualg_{1,m(J)}^F$.
Hence, using~\cite[14.28]{DM}, we deduce that $n_J=|H^1(F,\cal
Z(\Lalg_{m(J)}))|$ as required.
\end{proof}

\begin{corollary}\label{NbBorel}
With the above notation, one has
$$|\Irr_{p'}(\Balg^F)|=|\Zz(\Galg)^{\circ F}|\sum_{J\subseteq \cal O}
|\cal Z(\Lalg_{m(J)})^F|^2\prod_{\omega\in\cal O\backslash
J}(q^{|\omega|}-1),$$
where $m(J)$ is the subset of $\Delta$ associated to $J$ as in
Equation~(\ref{Jlevi}).
\end{corollary}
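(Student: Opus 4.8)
The plan is to derive the formula by substituting Lemma~\ref{ordcent} into Proposition~\ref{nbborel}; the only point requiring any argument is a standard identity relating $F$-coinvariants and $F$-fixed points of a finite abelian group. First I would start from Proposition~\ref{nbborel},
$$|\Irr_{p'}(\Balg^F)|=\sum_{J\subseteq \cal O}n_J|\Cen_{\Talg^F}(\phi_J)|,$$
and insert the value of $|\Cen_{\Talg^F}(\phi_J)|$ supplied by Lemma~\ref{ordcent}, namely $|\Cen_{\Talg^F}(\phi_J)|=n_J|\Zz(\Galg)^{\circ F}|\prod_{\omega\in\cal O\backslash J}(q^{|\omega|}-1)$. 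Since the factor $|\Zz(\Galg)^{\circ F}|$ does not depend on $J$, it can be pulled out of the sum, giving
$$|\Irr_{p'}(\Balg^F)|=|\Zz(\Galg)^{\circ F}|\sum_{J\subseteq\cal O}n_J^2\prod_{\omega\in\cal O\backslash J}(q^{|\omega|}-1),$$
so that the entire statement reduces to identifying $n_J^2$ with $|\cal Z(\Lalg_{m(J)})^F|^2$.

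For this I would invoke the other conclusion of Lemma~\ref{ordcent}, which asserts $n_J=|H^1(F,\cal Z(\Lalg_{m(J)}))|$, and then apply the elementary fact that $|H^1(F,A)|=|A^F|$ for every finite abelian group $A$ carrying the Frobenius endomorphism $F$. This is the same counting identity already used for $A=\cal Z(\Galg)$ in Section~\ref{sec:levi} and for $A=A_{\Galg^*}(s)$ in Section~\ref{sec:nbsemicar}: as $A$ is finite and abelian, the Lang map $\cal L\colon A\to A,\ g\mapsto g^{-1}F(g)$ is a group endomorphism with $\ker(\cal L)=A^F$ and image $\cal L(A)$, whence $|H^1(F,A)|=|A/\cal L(A)|=|A|/|\cal L(A)|=|\ker(\cal L)|=|A^F|$. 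Applying this to $A=\cal Z(\Lalg_{m(J)})$, the ($F$-stable, finite abelian) component group of the center of the $F$-stable Levi subgroup $\Lalg_{m(J)}$, yields $n_J=|\cal Z(\Lalg_{m(J)})^F|$.

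Squaring this equality and substituting into the displayed sum produces exactly the claimed expression. I expect no genuine obstacle: the corollary is essentially a repackaging of Proposition~\ref{nbborel} and Lemma~\ref{ordcent}, and the sole nontrivial ingredient is the passage from $n_J=|H^1(F,\cal Z(\Lalg_{m(J)}))|$ to $n_J=|\cal Z(\Lalg_{m(J)})^F|$ via the Lang-map counting argument recalled above.
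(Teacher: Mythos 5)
Your proposal is correct and follows exactly the paper's route: the author likewise derives the corollary as a direct consequence of Proposition~\ref{nbborel}, Lemma~\ref{ordcent}, and the equality $|H^1(F,\cal Z(\Lalg_{m(J)}))|=|\cal Z(\Lalg_{m(J)})^F|$, which holds by the same Lang-map counting argument you give (and which the paper already used for $A_{\Galg^*}(s)$ in Section~\ref{sec:nbsemicar}). The only difference is that you spell out this last identity, which the paper invokes without proof.
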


\begin{proof}
It is a direct consequence of Proposition~\ref{nbborel} and
Lemma~\ref{ordcent} and the equality $|H^1(F,\cal
Z(\Lalg_{m(J)}))|=|\cal Z(\Lalg_{m(J)})^F|$.
\end{proof}

\noindent In the following, we will need the following result.
\begin{lemma}
Fix $I\in\cal O$ and put $\overline{I}=\cal O\backslash I$. Then we have
$$\sum_{I\subseteq J\subseteq\cal O}\prod_{\omega\notin
J}(q^{|\omega|}-1)=q^{|m(\overline{I})|},$$
where $m$ is the map defined in Equation~(\ref{Jlevi}).
\label{sumprod}
\end{lemma}
\begin{proof}
First remark that
$$\sum_{I\subseteq J\subseteq\cal O}\prod_{\omega\notin
J}(q^{|\omega|}-1)=\sum_{J\subseteq \overline{I}}\prod_{\omega\in
J}(q^{|\omega|}-1).$$
Furthermore, for every finite set $A$ and $f:A\rightarrow\mathbb{R}$, one has
\begin{equation}\label{eq:comb}
\prod_{a\in A}(f(a)+1)=\sum_{J\subseteq A}\prod_{a\in
J}f(a).
\end{equation}
We apply Equation~(\ref{eq:comb}) with $A=\overline{I}$ and
$f:\overline{I}\rightarrow \mathbb{R},\,\omega\mapsto q^{|\omega|}-1$ and we
deduce
\begin{eqnarray*}
\sum_{J\subseteq\overline{I}}\prod_{\omega\in
J}(q^{|\omega|}-1)&=&\prod_{\omega\in \overline{I}}q^{|\omega|}\\
&=&q^{\sum_{\omega\in\overline{I}}|\omega|}
\end{eqnarray*}
Moreover, Equation~(\ref{Jlevi}) implies $|m(\overline{I})|=\sum_{\omega\in \overline{I}}|\omega|$ and
the result follows.
\end{proof}

\begin{remark}
If the center of $\Galg$ is connected, then the center of every Levi
subgroup $\Lalg$ of $\Galg$ is connected (because the map $h_{\Lalg}$ is
surjective).
In particular, Corollary~\ref{NbBorel} and Lemma~\ref{sumprod} (applied
with $I=\emptyset$) give
$$|\Irr_{p'}(\Balg^F)|=|\Zz(\Galg)^{F}|q^{|m(\cal
O)|}=|\Zz(\Galg)^F|q^{|\Delta|},$$
which is a well-known result; see~\cite[Remark 1]{Br6}.
\end{remark}

\subsection{The case of quasi-simple groups}\label{sectionquasi}
In this section, we suppose that $\Galg$ is a quasi-simple algebraic
group. We keep the notation as above. Recall that for
$I\subseteq\Delta$, the map $h_{L_I}:\cal Z(\Galg)\rightarrow\cal
Z(\Lalg_{I})$ denotes the surjective map induced by the inclusion
$\Zz(\Galg)\subseteq\Zz(\Lalg_I)$.
Moreover, recall that for every subgroup $K$ of $\cal Z(\Galg)$,
there is $I\subseteq\Delta$ such that $K=\ker(h_{\Lalg_I})$ (we use
here the fact that $\Galg$ is quasi-simple; see~\cite[2.9]{BonLevi}).
Then we denote by $I_K$ a subset of $\Delta$ such that
$K=\ker(h_{\Lalg_{I_K}})$ and $I_K$ is minimal (for the inclusion). In
particular, $\Lalg_{I_K}\in\cal L_{\operatorname{min}}(K)$.

\begin{proposition}\label{borelmin}
With the above notation, if $p$ is good for $\Galg$, we have
$$|\Irr_{p'}(\Balg^F)|=|\Zz(\Galg)^{\circ F}|\sum_{K\leq \cal
Z(\Galg)^F}\frac{|\cal
Z(\Galg)^F|^2}{|K|^2}\left(
q^{|\overline{I}_K|}-\sum_{K'\in\max(K)}q^{|\overline{I}_{K'}|}\right),$$
where $\max(K)$ denotes the set of maximal proper subgroups of $K$.
\end{proposition}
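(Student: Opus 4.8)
The plan is to begin from Corollary~\ref{NbBorel}, which already writes
$|\Irr_{p'}(\Balg^F)|=|\Zz(\Galg)^{\circ F}|\sum_{J\subseteq\cal O}|\cal Z(\Lalg_{m(J)})^F|^2\prod_{\omega\in\cal O\setminus J}(q^{|\omega|}-1)$, and to reorganise the sum over $J$ according to the kernel $\ker(h_{\Lalg_{m(J)}})$. Since $\Galg$ is quasi-simple, \cite[2.9]{BonLevi} guarantees that every subgroup of $\cal Z(\Galg)$ arises as such a kernel and that a minimal $F$-stable root subset $I_K$ realising a given subgroup is available; I would attach to each $J$ the image $K$ of $\ker(h_{\Lalg_{m(J)}})$ in $H^1(F,\cal Z(\Galg))\cong\cal Z(\Galg)^F$. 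The first point to pin down is the central factor: from $\cal Z(\Lalg_{m(J)})=\cal Z(\Galg)/\ker(h_{\Lalg_{m(J)}})$ together with $|\cal Z(\Lalg)^F|=|H^1(F,\cal Z(\Lalg))|=[\cal Z(\Galg):\ker(h_{\Lalg})\,\cal L(\cal Z(\Galg))]$ one reads off $|\cal Z(\Lalg_{m(J)})^F|=|\cal Z(\Galg)^F|/|K|$, so that each summand contributes the factor $|\cal Z(\Galg)^F|^2/|K|^2$.

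After regrouping, $|\Irr_{p'}(\Balg^F)|/|\Zz(\Galg)^{\circ F}|$ equals $\sum_K(|\cal Z(\Galg)^F|^2/|K|^2)\,S(K)$, where $S(K)$ is the sum of $\prod_{\omega\in\cal O\setminus J}(q^{|\omega|}-1)$ over those $J$ whose associated subgroup is exactly $K$. The crucial monotonicity is that $\ker(h_{\Lalg_I})$ decreases as $I$ increases (a larger Levi has a smaller relative centre), so that the condition ``associated subgroup $\leq K$'' is equivalent to $m(J)\supseteq I_K$, i.e.\ to $J\supseteq J_K$ with $I_K=m(J_K)$. Lemma~\ref{sumprod}, applied with $I=J_K$, then evaluates the cumulative sum over all such $J$ as $q^{|m(\cal O\setminus J_K)|}=q^{|\overline{I}_K|}$, using $m(\cal O\setminus J_K)=\Delta\setminus I_K=\overline{I}_K$.

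It then remains to isolate the ``exactly $K$'' part $S(K)$ from the cumulative sum $q^{|\overline{I}_K|}$. Every $J$ whose associated subgroup is strictly contained in $K$ has its kernel contained in some maximal proper subgroup $K'$ of $K$, whence subtracting the cumulative sums $q^{|\overline{I}_{K'}|}$ for $K'\in\max(K)$ produces the bracket $q^{|\overline{I}_K|}-\sum_{K'\in\max(K)}q^{|\overline{I}_{K'}|}$ and hence the stated formula.

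I expect this last inclusion--exclusion to be the main obstacle. A priori a given $J$ with kernel strictly below $K$ may have its kernel contained in several maximal subgroups simultaneously, so the naive subtraction over $\max(K)$ could over- or undercount, and making the passage rigorous requires a M\"obius argument on the lattice of subgroups of $\cal Z(\Galg)^F$. The cleanest case --- and the one relevant to Table~\ref{tab:ssprime} --- is when $\cal Z(\Galg)^F$ is cyclic of prime-power order, for then $\max(K)$ has a single element and the subtraction is immediate; the secondary point demanding care is the central identity $|\cal Z(\Lalg_{m(J)})^F|=|\cal Z(\Galg)^F|/|K|$, where one must interpret $K$ as the image of $\ker(h_{\Lalg_{m(J)}})$ in $H^1(F,\cal Z(\Galg))$ rather than in $\cal Z(\Galg)$ itself when $F$ acts nontrivially on the centre.
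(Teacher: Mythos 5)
Your route is the paper's route almost line for line: regroup Corollary~\ref{NbBorel} by the kernel $K=\ker(h_{\Lalg_{m(J)}})$, identify $|\cal Z(\Lalg_{m(J)})^F|=|\cal Z(\Galg)^F|/|K|$, translate the condition ``$\ker(h_{\Lalg_{m(J)}})\subseteq K$'' into ``$\widetilde{I}_K\subseteq J$'' via minimality of $\Lalg_{I_K}$ in $\cal L_{\operatorname{min}}(K)$, and evaluate the cumulative sums with Lemma~\ref{sumprod}. The one step you declined to carry out --- isolating ``kernel exactly $K$'' by subtracting the cumulative sums over $K'\in\max(K)$ --- is precisely the step the paper settles by fiat: its proof writes $\{J\mid\ker(h_{\Lalg_{m(J)}})\subsetneq K\}=\bigsqcup_{K'\in\max(K)}\{J\mid\ker(h_{\Lalg_{m(J)}})\subseteq K'\}$ as a \emph{disjoint} union and subtracts accordingly. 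So measured against the paper, your proof is the same argument, stopping honestly where the paper asserts disjointness.

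Your hesitation at that point is the mathematically correct reaction: the disjointness is false whenever the subgroup lattice of $\cal Z(\Galg)^F$ is not a chain, because $J=\cal O$ has $\ker(h_{\Lalg_{m(\cal O)}})=\ker(h_{\Galg})=\{1\}$ and therefore lies in $C_{K'}$ for \emph{every} maximal $K'$. Concretely, take $\Galg=\SL_6$ with $q\equiv 1\bmod 6$, so $\cal Z(\Galg)^F\cong\Z/6\Z$ has the two maximal subgroups $\mu_2$ and $\mu_3$. Summing Corollary~\ref{NbBorel} directly over compositions of $6$ (with $|\cal Z(\Lalg_J)^F|$ equal to the gcd of the block sizes) gives
$$36+19(q-1)+13(q-1)^2+10(q-1)^3+5(q-1)^4+(q-1)^5=q^5+3q^2+8q+24,$$
whereas the stated formula yields $36+9(q-1)+4(q^2-1)+(q^5-q^2-q)=q^5+3q^2+8q+23$: the subset $J=\Delta$ is subtracted twice in the $K=\mu_6$ bracket. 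The correct general statement replaces the bracket by $\sum_{K'\leq K}\mu(K',K)\,q^{|\overline{I}_{K'}|}$, with $\mu$ the M\"obius function of the subgroup lattice --- exactly the fix you proposed. When $\cal Z(\Galg)^F$ is cyclic of prime power order (in particular in all the prime-order cases feeding Proposition~\ref{borelprem}, Remark~\ref{rk:mkdef} and Theorem~\ref{main}) the lattice is a chain, $\max(K)$ is a singleton, and both the proposition and your proof are correct as written; but for type $D_{2n}$, where $\cal Z(\Galg)^F\cong\mu_2\times\mu_2$ and $\mu(\{1\},\cal Z(\Galg)^F)=2$, the subsequent count inherits the error (the term $4q^{n-1}$ should read $6q^{n-1}$). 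In short: same approach as the paper, and the ``main obstacle'' you flagged is a genuine gap --- in the paper's proof, not merely in your write-up.
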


\begin{proof}
For a subgroup $K$ of $\cal Z(\Galg)$, we define
$$
A_K=\{I\in\Delta\,|\, I_K\subseteq I\}\quad\textrm{and}\quad B_K=
\{J\in\cal O\,|\,\ker(h_{\Lalg_{m(J)}})=K\}.
$$
where $m(J)$ is the subset of $\Delta$ associated to $J$ defined in
Equation~(\ref{Jlevi}).
Then Corollary~\ref{NbBorel} implies
\begin{eqnarray*}
|\Irr_{p'}(\Balg^F)|&=&|\Zz(\Galg)^{\circ F}|\sum_{K\leq \cal Z(\Galg)^F}
\sum_{J\in B_K} 
|\cal Z(\Lalg_{m(J)})^F|^2
\prod_{\omega\notin J}(q^{|\omega|}-1)\\
&=&|\Zz(\Galg)^{\circ F}|\sum_{K\leq \cal Z(\Galg)^F} 
|\cal Z(\Lalg_{m(J)})^F|^2
\sum_{J\in B_K} 
\prod_{\omega\notin J}(q^{|\omega|}-1),
\end{eqnarray*}
because for $J\in B_K$, the numbers $|\cal Z(\Lalg_{m(J)})^F|$
are constant.
Furthermore, one has
$$B_K=\{J\in\cal O\,|\,\ker(h_{\Lalg_{m(J)}})\subseteq
K\}\backslash\{J\in\cal O\,|\,\ker(h_{\Lalg_{m(J)}})\subsetneq K\}.
$$
Note that $\Lalg_{I_K}$ is $F$-stable. Then $I_K$ is a union of some
$F$-orbits lying in a subset $\widetilde{I}_K$ of $\cal O$, such that
$m(\widetilde{I}_K)=I_K$. Since $\Lalg_{I_K}\in\cal
L_{\operatorname{min}}(K)$, it follows
$$\{J\in\cal O\, |\, \ker(h_{\Lalg_{m(J)}}) \subseteq K\}=\{J\in\cal O\,|\,
\widetilde{I}_K\subseteq J\}.$$
Moreover, one has
\begin{eqnarray*}
\{J\in\cal O\,|\,\ \ker(h_{\Lalg_{m(J)}})\subsetneq
K\}&=&\bigsqcup_{K'\in\max(K)}\{J\in\cal O\, |\,
\ker(h_{\Lalg_{m(J)}}) \subseteq K'\}\\
&=&\bigsqcup_{K'\in\max(K)}\{J\in\cal O\, |\,
\widetilde{I}_{K'}\subseteq J\}.
\end{eqnarray*}
Thus, if we put 
$C_K=\{J\in\cal O\,|\,\widetilde{I}_K\subseteq J\}$, then it follows
\begin{eqnarray*}
\sum_{J\in B_K} \prod_{\omega\notin J}(q^{|\omega|}-1)
&=&\sum_{J\in C_K}\prod_{w\notin
J}(q^{|w|}-1)-\sum_{K'\in\max(K)}\sum_{J\in C_{K'}
}\prod_{\omega\notin J}(q^{|\omega|}-1)\\
&=&q^{|\Delta\backslash
m(\widetilde{I}_K)|}-\sum_{K'\in\max{K}}q^{|\Delta\backslash
m(\widetilde{I}_{K'})|}.
\end{eqnarray*}
The last equality comes from Lemma~\ref{sumprod}. 
Moreover, we have $h_{\Lalg_{m(J)}}(\cal Z(\Galg)^F)=\cal
Z(\Lalg_{m(J)})^F$ implying $|\cal Z(\Lalg_{m(J)})^F|=|\cal
Z(\Galg)^F/K|$.
The result follows.
\end{proof}

\begin{proposition}\label{borelprem}
Let $\Galg$ be a connected reductive group defined over $\F_q$ with
corresponding Frobenius $F$. Suppose $p$ is a good prime for $\Galg$ and
$\cal Z(\Galg)^F$ has prime
order $\ell$. Put $r=|I|$ for $\Lalg_{I}$ in $\cal
L_{\operatorname{min}}(\{1\})$. Then we have
$$|\Irr_{p'}(\Balg^F)|= |\Zz(\Galg)^{\circ
F}|\left(q^l+(\ell^2-1)q^{l-r}\right),$$
where $l$ is the semisimple rank of $\Galg$.
\end{proposition}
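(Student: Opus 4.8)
The plan is to substitute the hypothesis $|\cal Z(\Galg)^F|=\ell$ directly into the formula of Proposition~\ref{borelmin} and observe that the sum over subgroups collapses to just two terms. Since $\ell$ is prime, the group $\cal Z(\Galg)^F$ has exactly two subgroups, namely the trivial subgroup $\{1\}$ and $\cal Z(\Galg)^F$ itself. Hence the sum
$$\sum_{K\leq \cal Z(\Galg)^F}\frac{|\cal Z(\Galg)^F|^2}{|K|^2}\left(q^{|\overline{I}_K|}-\sum_{K'\in\max(K)}q^{|\overline{I}_{K'}|}\right)$$
has precisely these two contributions, and the proof amounts to evaluating each and adding them.

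First I would treat $K=\{1\}$. Here the prefactor $|\cal Z(\Galg)^F|^2/|K|^2$ equals $\ell^2$, and since the trivial group has no proper subgroup, the set $\max(\{1\})$ is empty and the inner correction sum vanishes. By the very definition of $r$ we have $\Lalg_{I_{\{1\}}}\in\cal L_{\operatorname{min}}(\{1\})$ with $|I_{\{1\}}|=r$; recalling from the proof of Proposition~\ref{borelmin} that $|\overline{I}_K|=|\Delta|-|I_K|$ and that $|\Delta|=l$, this gives $|\overline{I}_{\{1\}}|=l-r$. Thus the $K=\{1\}$ term contributes $\ell^2 q^{l-r}$.

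Next I would treat $K=\cal Z(\Galg)^F$. Now the prefactor is $1$, and since $\ell$ is prime the unique maximal proper subgroup is $\{1\}$, so $\max(K)=\{\{1\}\}$. The key identification is that $|\overline{I}_{\cal Z(\Galg)^F}|=l$: the maximal torus $\Talg=\Lalg_{\emptyset}$ satisfies $\ker(h_{\Talg})\supseteq\cal Z(\Galg)^F$, so $I_{\cal Z(\Galg)^F}=\emptyset$ is the minimal subset of $\Delta$ realizing this kernel, whence $|\overline{I}_{\cal Z(\Galg)^F}|=|\Delta|=l$. This term therefore contributes $q^{l}-q^{l-r}$, the subtracted summand coming from the maximal subgroup $\{1\}$.

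Adding the two contributions gives $\ell^2 q^{l-r}+q^{l}-q^{l-r}=q^{l}+(\ell^2-1)q^{l-r}$, and multiplying by $|\Zz(\Galg)^{\circ F}|$ produces the asserted formula. The only genuinely delicate point is the bookkeeping around $\overline{I}_K$ and $\max(K)$: one must verify that $K=\cal Z(\Galg)^F$ is realized by the torus so that $|\overline{I}_K|=l$, and that the empty $\max(\{1\})$ correctly leaves the $K=\{1\}$ term uncorrected while the $\{1\}$ appearing in $\max(\cal Z(\Galg)^F)$ supplies the telescoping subtraction. Once these identifications are secured, the remainder is a one-line arithmetic simplification.
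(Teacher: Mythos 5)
Your argument is essentially identical to the paper's own proof: the author likewise evaluates the sum of Proposition~\ref{borelmin} over the only two subgroups of the prime-order group, obtaining $\ell^{2}q^{\,l-r}$ from $K=\{1\}$ (empty $\max(\{1\})$, $|\overline{I}_{\{1\}}|=l-r$) and $q^{l}-q^{\,l-r}$ from $K=\cal Z(\Galg)^F$ (via $I_{\cal Z(\Galg)^F}=\emptyset$ and $\max(K)=\{\{1\}\}$), and adds. The only point worth noting is that Proposition~\ref{borelmin} is stated under the standing quasi-simplicity hypothesis of \S\ref{sectionquasi} while the present proposition is not, so strictly one reruns its \emph{proof} rather than quoting its statement --- the paper opens with exactly this remark, and the identifications you verify inline (that $K=\{1\}$ is realized by some $\Lalg_{I}\in\cal L_{\operatorname{min}}(\{1\})$ with $|I|=r$, and $K=\cal Z(\Galg)^F$ by the torus $\Lalg_{\emptyset}$) are precisely what makes that rerun valid in the general connected reductive case.
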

\begin{proof}
First remark that we do not suppose that $\Galg$ is quasi-simple.
Indeed, the set $\cal L_{\operatorname{min}}(\{1\})$ is non-empty. 
If we denote
by $\Lalg_I$ a standard Levi lying in $\cal
L_{\operatorname{min}}(\{1\})$, then we have $\ker(h_{\Lalg_I})=\{1\}$.
Hence $I=I_{\{1\}}$ (see the beginning of \S\ref{sectionquasi} for the
notation). Moreover, we always have $I_{\cal Z(\Galg)^F}=\emptyset$.
We can then apply the proof of Proposition~\ref{borelmin}. We obtain
\begin{eqnarray*}
|\Irr_{p'}(\Balg^F)|&=&|\Zz(\Galg)^{\circ F}|\left(|\cal
Z(\Galg)^F|^2q^{|\Delta|-r}+q^{|\Delta|}-q^{|\Delta|-r}\right)\\
&=&|\Zz(\Galg)^{\circ F}|\left(q^{|\Delta|}+(|\cal
Z(\Galg)^F|^2-1)q^{|\Delta|-r}\right).
\end{eqnarray*}
Since $|\Delta|$ is the semisimple rank of $\Galg$, the result follows.
\end{proof}

%
\begin{remark}\label{rk:mkdef}
For a group $\Galg$ as in Proposition~\ref{borelprem}, if
$\zeta$ denotes a non-trivial character of $H^1(F,\cal Z(\Galg))$ and
$\Lalg_{\zeta}$ its associated cuspidal Levi of $\Galg$, then
$\Lalg_{\zeta}$ is $F$-stable and $\ker(h_{\Lalg_{\zeta}})$ is trivial.
Then $\Lalg_{\zeta}\in\cal L_{\operatorname{min}}(\{1\})$. In
particular, the number $r$ of Proposition~\ref{borelprem} is equal to
the semisimple rank of $\Lalg_{\zeta}$, implying
$$|\Irr_{p'}(\Balg^F)|=|\Zz(\Galg)^{\circ
F}|\left(q^l+(\ell^2-1)q^{l-(\operatorname{ss-rk}(\Lalg_{\zeta}))}\right).$$
Comparing with Proposition~\ref{semiprime},
we deduce
$$|\Irr_{p'}(\Galg^F)|=|\Irr_{p'}(\Balg^F)|.$$
Hence, this then proves that, if $p$ is a good prime for $\Galg$ and
$H^1(F,\cal Z(\Galg)$ has prime order, then the McKay conjecture holds
for $\Galg$ in defining characteristic.
\end{remark}

\begin{proposition}
If $\Galg$ is a simple and simply-connected algebraic group of type
$D_{2n}$, then $$|\Irr_{p'}(\Balg^F)|=q^{2n}+3q^{2n-2}+6q^n+4q^{n-1}.$$
If $\Galg$ is a simple and simply-connected algebraic group of type
$D_{2n+1}$ with $H^1(F,\cal Z(\Galg))$ of order $4$, then $$|\Irr_{p'}(\Balg^F)|=q^{2n+1}+3q^{2n-1}+12q^{n-1}.$$
\end{proposition}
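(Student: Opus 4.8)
The plan is to apply the general machinery of Corollary~\ref{NbBorel} directly to the simply-connected groups of type $D_n$, where $\cal Z(\Galg)$ is no longer cyclic, so the prime-order shortcut of Proposition~\ref{borelprem} is unavailable. First I would recall that for $\Galg$ simply-connected of type $D_{2n}$ the component group $\cal Z(\Galg)=\mu_2\times\mu_2$ has four subgroups besides itself: the trivial one and the three subgroups $c_1,c_2,c_3$ of order~$2$ recorded in Table~\ref{tab:bon}, with the associated cuspidal Levi data $\operatorname{ss-rk}(\Lalg)\in\{n+1,n,n,2\}$ for $\cal L_{\operatorname{min}}(K)$ as $K$ runs through $\{1\},c_1,c_2,c_3$ respectively (and $\operatorname{ss-rk}=0$ for $K=\cal Z(\Galg)$, i.e.\ the torus). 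Since the statement presupposes $H^1(F,\cal Z(\Galg))$ has full size (which for $D_{2n}$ and $p\neq 2$ means $\cal Z(\Galg)^F=\mu_2\times\mu_2$), every subgroup $K\leq\cal Z(\Galg)$ is $F$-stable and $|\cal Z(\Galg)^F/K|=4/|K|$.

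Next I would use Proposition~\ref{borelmin}, whose formula expresses $|\Irr_{p'}(\Balg^F)|$ as a sum over subgroups $K\leq\cal Z(\Galg)^F$ of
$$\frac{|\cal Z(\Galg)^F|^2}{|K|^2}\Bigl(q^{|\overline I_K|}-\sum_{K'\in\max(K)}q^{|\overline I_{K'}|}\Bigr),$$
with $\overline I_K=\Delta\setminus I_K$ and $I_K$ a minimal subset of $\Delta$ realizing $\ker(h_{\Lalg_{I_K}})=K$. The quantity $|\overline I_K|=|\Delta|-\operatorname{ss-rk}(\Lalg_{I_K})=l-\operatorname{ss-rk}$, so I can read off each exponent from Table~\ref{tab:bon}. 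For $D_{2n}$ with $l=2n$ I get exponents $q^{l-(n+1)}=q^{n-1}$ for $K=\{1\}$, $q^{l-n}=q^{n}$ for $K=c_1,c_2$, $q^{l-2}=q^{2n-2}$ for $K=c_3$, and $q^{l-0}=q^{2n}$ for $K=\cal Z(\Galg)$. The lattice structure enters through the $\max(K)$ correction terms: each order-$2$ subgroup has $\max(K)=\{1\}$, while $\cal Z(\Galg)$ itself has the three order-$2$ subgroups as its maximal proper subgroups. Plugging in the coefficients $|\cal Z(\Galg)^F/K|^2 = (4/|K|)^2$, namely $16,4,4,4,1$ for $K=\{1\},c_1,c_2,c_3,\cal Z(\Galg)$, and summing the telescoping contributions, I expect everything to collapse to $q^{2n}+3q^{2n-2}+6q^{n}+4q^{n-1}$.

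For the $D_{2n+1}$ case with $|H^1(F,\cal Z(\Galg))|=4$ the center $\cal Z(\Galg)=\mu_4$ is cyclic, so the subgroup lattice is a chain $\{1\}\subset\mu_2\subset\mu_4$ and Table~\ref{tab:bon} gives $\operatorname{ss-rk}=n+2$ for $K=\{1\}$ and $\operatorname{ss-rk}=2$ for $K=\mu_2$ (and $0$ for $\mu_4$). With $l=2n+1$ this yields exponents $q^{n-1}$, $q^{2n-1}$, $q^{2n+1}$, coefficients $(4/|K|)^2 = 16,4,1$, and the single-chain $\max$-corrections $\max(\mu_2)=\{1\}$, $\max(\mu_4)=\{\mu_2\}$; the telescoping sum should give $q^{2n+1}+3q^{2n-1}+12q^{n-1}$.

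The main obstacle I anticipate is bookkeeping rather than conceptual: correctly identifying which subgroup $K$ carries which Levi from Table~\ref{tab:bon} (in particular distinguishing the two order-$2$ subgroups $c_1,c_2$ of rank $n$ from $c_3$ of rank $2$ in the $D_{2n}$ case) and tracking the $\max(K)$-correction signs so that the cancellations land on the right monomials. I would double-check the arithmetic by verifying that setting $q$ to a generic value and summing the signed contributions reproduces exactly the claimed polynomials, and by cross-checking the leading term $q^{2n}$ (resp.\ $q^{2n+1}$) against the connected-center benchmark $|\Zz(\Galg)^F|q^{|\Delta|}$ recorded in the Remark following Lemma~\ref{sumprod}.
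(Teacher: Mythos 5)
Your proposal is correct and follows essentially the same route as the paper: the paper's proof likewise reads off the values $|\overline{I}_K|$ for each subgroup $K\leq\cal Z(\Galg)^F$ from Table~\ref{tab:bon} (recording them in a small table, namely $n-1,n,n,2n-2,2n$ for $D_{2n}$ and $n-1,2n-1,2n+1$ for $D_{2n+1}$, exactly matching your exponents $l-\operatorname{ss-rk}$) and then applies Proposition~\ref{borelmin}. Your signed sums do collapse to the stated polynomials, so the only thing your write-up leaves implicit that you flagged yourself is the routine arithmetic, which checks out.
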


\begin{proof}
If $\Galg$ is simple and simply-connected group of type
$D_{2n}$, then $\cal Z(\Galg)^F$ is a finite group of 
order $4$ with exponent $2$. Denote by $c_1$, $c_2$
and $c_3$ its subgroups of order $2$. 
Moreover, using Table~\ref{tab:bon}, we deduce
$$
\renewcommand{\arraystretch}{1.4}
\begin{array}{ccc}
\begin{array}{|c|c|}
\hline
K&|\overline{I}_K|\\
\hline
\{1\}&n-1\\
c_1&n\\
c_2&n\\
c_3&2n-2\\
\cal{Z}(\Galg)^F&2n\\
\hline
\end{array}
&
\quad
&
\begin{array}{|c|c|}
\hline
K&|\overline{I}_K|\\
\hline
\{1\}&n-1\\
\Z_2&2n-1\\
\cal{Z}(\Galg)^F&2n+1\\
\hline
\end{array}
\\
\textrm{Type }D_{2n}
&
&
\textrm{Type }D_{2n+1}
\end{array}
$$
The result then follows from Proposition~\ref{borelmin}.
\end{proof}

\section{Restriction of semisimple characters to the
center}\label{sec:rest}

In this section, we keep the notation as above. To simplify the
notation, we set $G=\Galg^F$, $Z=\Zz(\Galg)^F$ and $U=\Ualg^F$.
For $z\in H^1(F,\cal Z(\Galg))$ and $\nu\in\Irr(Z)$, we put
$$\Gamma_{z,\nu}=\Ind_{ZU}^{G}(\nu\otimes\phi_z),$$ where $\phi_z$
is the regular character of $U$ corresponding to $z$. Note that by
Clifford theory, one has
$$\Ind_U^{ZU}(\phi_z)=\sum_{\nu\in\Irr(Z)}\,\nu\otimes\phi_z.$$
We then deduce that
$$\Gamma_z=\sum_{\nu\in\Irr(Z)}\Gamma_{z,\nu}.$$

\begin{lemma}
Denote by $E_z$ and $E_{z,\nu}$ the set of constituents of $\Gamma_z$
and $\Gamma_{z,\nu}$, respectively. Then
$$E_{z,\nu}=\{\chi\in E_z \,|\, \cyc{\Res_{Z}^G(\chi),\nu}_Z\neq
0\}.$$
\label{centt}
\end{lemma}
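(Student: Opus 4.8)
The plan is to reduce everything to the single observation that $Z$ is a central subgroup of $G$, so that each irreducible character of $G$ carries a well-defined central character on $Z$, and then to extract the two inclusions from Frobenius reciprocity together with the decomposition $\Gamma_z=\sum_\nu\Gamma_{z,\nu}$ recorded just before the statement.

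First I would record the structural facts that are used tacitly. Since $\Zz(\Galg)\subseteq\Talg$ and $\Talg\cap\Ualg=\{1\}$, one has $Z\cap U=\{1\}$, and as $Z$ is central in $G$ the subgroup $ZU$ is the internal direct product $Z\times U$; this is exactly what makes $\nu\otimes\phi_z$ a well-defined linear character of $ZU$, with $\Res_Z^{ZU}(\nu\otimes\phi_z)=\nu$ because $\phi_z$ has degree $1$. Next, because $Z\subseteq\Zz(G)$, every $\chi\in\Irr(G)$ satisfies $\Res_Z^G(\chi)=\chi(1)\lambda_\chi$ for a unique $\lambda_\chi\in\Irr(Z)$; hence the condition $\cyc{\Res_Z^G(\chi),\nu}_Z\neq 0$ is simply equivalent to $\lambda_\chi=\nu$. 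This reformulation of the right-hand side is what I would use throughout.

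For the inclusion $E_{z,\nu}\subseteq\{\chi\in E_z\mid\cyc{\Res_Z^G(\chi),\nu}_Z\neq 0\}$: if $\chi\in E_{z,\nu}$ then $\chi$ is a constituent of $\Ind_{ZU}^G(\nu\otimes\phi_z)$, so by Frobenius reciprocity $\nu\otimes\phi_z$ occurs in $\Res_{ZU}^G(\chi)$; restricting one step further to $Z$ shows that $\nu$ occurs in $\Res_Z^G(\chi)=\chi(1)\lambda_\chi$, whence $\lambda_\chi=\nu$. Moreover $\chi\in E_z$, since $\Gamma_{z,\nu}$ is a summand of $\Gamma_z$. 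For the reverse inclusion I would use $\Gamma_z=\sum_{\nu'\in\Irr(Z)}\Gamma_{z,\nu'}$, so that $E_z=\bigcup_{\nu'}E_{z,\nu'}$: any $\chi\in E_z$ lies in some $E_{z,\nu'}$, and by the inclusion just proved its central character on $Z$ is $\nu'$, i.e.\ $\lambda_\chi=\nu'$; if in addition $\cyc{\Res_Z^G(\chi),\nu}_Z\neq 0$, that is $\lambda_\chi=\nu$, then $\nu'=\nu$ and hence $\chi\in E_{z,\nu}$.

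I do not expect a genuine obstacle: the whole argument is Clifford theory for the central subgroup $Z$, and the multiplicity-freeness of $\Gamma_z$ plays no role, since only the supports $E_z$ and $E_{z,\nu}$ are at issue. The one point deserving a little care is the direct-product identification $ZU=Z\times U$ and the resulting equality $\Res_Z^{ZU}(\nu\otimes\phi_z)=\nu$, which is precisely what allows the central character of a constituent to be read off; everything else is formal.
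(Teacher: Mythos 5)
Your proof is correct, but it takes a genuinely more elementary route than the paper's. The paper proves the lemma by computing $\Res_Z^G(\Ind_{ZU}^G(\varphi))$ via Mackey's formula for the double cosets $ZU\backslash G/Z$: since $Z$ is central, every intersection ${}^r(ZU)\cap Z$ equals $Z$ and ${}^r\varphi$ restricts to $\nu$ on $Z$, which yields $\cyc{\Gamma_{z,\nu},\Ind_Z^G(\nu')}_G=|R|\,\delta_{\nu,\nu'}$ for $R$ a set of double coset representatives, and the lemma then follows by the same partition argument you use at the end. You replace that Mackey computation by the observation that $Z\subseteq\Zz(G)$ equips each $\chi\in\Irr(G)$ with a central character $\lambda_\chi$ on $Z$, so the condition $\cyc{\Res_Z^G(\chi),\nu}_Z\neq 0$ becomes $\lambda_\chi=\nu$; Frobenius reciprocity pins down $\lambda_\chi=\nu$ for $\chi\in E_{z,\nu}$, and the reverse inclusion via $\Gamma_z=\sum_{\nu'}\Gamma_{z,\nu'}$ and $E_z=\bigcup_{\nu'}E_{z,\nu'}$ is common to both arguments. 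Your version buys transparency and economy: it makes explicit the structural facts the paper leaves tacit ($Z\cap U=\{1\}$ since $\Zz(\Galg)\subseteq\Talg$ and $\Talg\cap\Ualg=\{1\}$, hence $ZU=Z\times U$ and $\Res_Z^{ZU}(\nu\otimes\phi_z)=\nu$), and you correctly note that multiplicity-freeness of $\Gamma_z$ is irrelevant, since only supports are at stake. What the paper's computation buys is the explicit multiplicity $|R|$ (unused for this lemma) and a template for the Mackey-style argument that is genuinely needed immediately afterwards in Lemma~\ref{ind}, where one restricts to $U$ rather than to the central subgroup $Z$ and no central-character shortcut is available.
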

\begin{proof}
We denote by $R$ a set of representatives of the
double cosets $ZU\backslash G/Z$. Therefore, for $\varphi\in\Irr(ZU)$,
Mackey's theorem implies
\begin{eqnarray}
\Res_{Z}^G(\Ind_{ZU}^G(\varphi))&=&\sum_{r\in R}\Ind_{ {}^r(ZU)\cap
Z}^{Z}\left(\Res_{{}^r(ZU)\cap
Z}({}^r\varphi)\right) \nonumber\\
&=&\sum_{r\in R}\Res_{Z}\left({}^r\varphi\right).
\label{eq:mackey}
\end{eqnarray}
Fix now $\nu,\,\nu'\in\Irr(Z)$. Then Equation~(\ref{eq:mackey}) applied
with $\varphi=\nu\otimes\phi_z$ implies
\begin{eqnarray*}
\cyc{\Gamma_{z,\nu},\Ind_Z^G(\nu')}_G&=&\cyc{\Res_Z^G\left(\Gamma_{z,\nu}\right),\nu'}_Z\\
&=&\cyc{\sum_{r\in R}\nu,\nu'}_Z\\
&=&|R|\cyc{\nu,\nu'}_Z\\
&=&|R|\delta_{\nu,\nu'}.
\end{eqnarray*}
The result then follows.
\end{proof}

\begin{remark}\label{rk:dualite}
Note that if we denote by $F_z$ and $F_{z,\nu}$ the set of
constituents of $D_{\Galg}(\Gamma_z)$ and $D_{\Galg}(\Gamma_{z,\nu})$,
respectively, then $F_{z,\nu}=\{\chi\in
E_z\,|\,\cyc{\Res_Z^G(\chi,\nu}_Z\neq 0\}$.
Indeed, by~\cite[12.8]{DM} and~\cite[2.2]{MaHeight},
$D_{\Galg}(\Ind_Z^G(\nu))=\Ind_Z^G(\nu)$. In particular, $D_G$ induces
a bijection between $E_{z,\nu}$ and $F_{z,\nu}$. 
\end{remark}

\begin{lemma}
With the above notation, for $z,\,z'\in H^1(F,\cal Z(\Galg))$ and
$\nu,\,\nu'\in\Irr(Z)$, one has
$$\cyc{\Gamma_{z,\nu},\Gamma_{z',\nu}}_G=\cyc{\Gamma_{z,\nu'},\Gamma_{z',\nu'}}_G.$$
\label{ind}
\end{lemma}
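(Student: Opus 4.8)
The plan is to use that the Alvis--Curtis duality $D_{\Galg}$ is an isometry, so that $\cyc{\Gamma_{z,\nu},\Gamma_{z',\nu}}_G=\cyc{D_{\Galg}(\Gamma_{z,\nu}),D_{\Galg}(\Gamma_{z',\nu})}_G$, and to exploit the fact that $D_{\Galg}(\Gamma_z)$ is supported on regular unipotent elements, where the behaviour under the central subgroup $Z=\Zz(\Galg)^F$ is extremely rigid. Writing $\pi_\nu$ for the projection of class functions onto the $\nu$-isotypic part for $Z$, namely $\pi_\nu(f)(g)=\frac{1}{|Z|}\sum_{a\in Z}\overline{\nu(a)}f(ag)$, I would first observe that $\Gamma_{z,\nu}=\pi_\nu(\Gamma_z)$: indeed all constituents of $\Gamma_{z,\nu}=\Ind_{ZU}^G(\nu\otimes\phi_z)$ lie over $\nu$ (since $Z$ is central and $(\nu\otimes\phi_z)|_Z=\nu$), and $\Gamma_z=\sum_{\nu}\Gamma_{z,\nu}$. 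As $D_{\Galg}$ preserves central characters (this is the content behind Remark~\ref{rk:dualite}, coming from $\Zz(\Galg)\subseteq\Zz(\Lalg)$ for every Levi $\Lalg$), it commutes with $\pi_\nu$; combined with Equation~(\ref{eq:dg}) this gives $D_{\Galg}(\Gamma_{z,\nu})=\sum_{z_1}c_{z,z_1}\,\pi_\nu(\gamma_{z_1})$.

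Next I would expand the scalar product of the two duals. After substituting the definition of $\pi_\nu$, the computation reduces to summing the terms $\overline{\nu(a)}\,\nu(b)\,\gamma_{z_1}(ag)\overline{\gamma_{z_2}(bg)}$ over $a,b\in Z$, $g\in G$ and $z_1,z_2\in H^1(F,\cal Z(\Galg))$. The key geometric input is that every element of $Z$ is semisimple (being contained in a maximal torus), so for $c\in Z$ and a regular unipotent $x$ the product $cx$ has Jordan decomposition $c\cdot x$ and is unipotent only when $c=1$. Since $\gamma_{z_1}$ is supported on the regular unipotent class $\cal U_{z_1}$, the factor $\gamma_{z_1}(ag)\overline{\gamma_{z_2}(bg)}$ can be nonzero only if $ag$ and $bg=(ba^{-1})(ag)$ are both regular unipotent, which forces $ba^{-1}=1$, hence $a=b$, and then $z_1=z_2$ because distinct classes are disjoint.

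With $a=b$ imposed, every surviving term carries the factor $\overline{\nu(a)}\nu(a)=|\nu(a)|^2=1$, so all dependence on $\nu$ disappears. A change of variables $x=ag$ then collapses the sum to $\frac{1}{|Z|}\sum_{z_1}c_{z,z_1}\overline{c_{z',z_1}}\cyc{\gamma_{z_1},\gamma_{z_1}}_G$, a quantity visibly independent of $\nu$. Applying this with $\nu$ and with $\nu'$ yields the same value, which is the assertion. As a consistency check, summing over $\nu\in\Irr(Z)$ recovers $\cyc{\Gamma_z,\Gamma_{z'}}_G=\sum_{z_1}c_{z,z_1}\overline{c_{z',z_1}}\cyc{\gamma_{z_1},\gamma_{z_1}}_G$, in agreement with Equation~(\ref{eq:pr1}).

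The step I expect to be the main obstacle is the first one: justifying $D_{\Galg}(\Gamma_{z,\nu})=\pi_\nu(D_{\Galg}(\Gamma_z))$, that is, that duality commutes with the central-character projection. Everything else is then a rigidity argument about regular unipotent elements together with a bookkeeping computation; the genuinely load-bearing fact is that a nontrivial central (hence semisimple) translate of a regular unipotent element is never unipotent, which is precisely what forces the $\nu$-twists to cancel. A direct Mackey-formula attack on $\cyc{\Gamma_{z,\nu},\Gamma_{z',\nu}}_G$ also seems possible, but there the individual double-coset contributions do depend on $\nu$ through the $Z$-component of $g^{-1}ug$, so one would still have to prove that the sum is $\nu$-independent; the duality route avoids this by trading the weak support of $\Ind_U^G\phi_z$ for the rigid unipotent support of its dual.
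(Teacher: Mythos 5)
Your proof is correct, but it takes a genuinely different route from the paper --- which, ironically, proves Lemma~\ref{ind} by exactly the ``direct Mackey-formula attack'' you set aside at the end. The paper first uses Lemma~\ref{centt} to write $\cyc{\Gamma_{z,\nu},\Gamma_{z',\nu}}_G=\cyc{\Gamma_{z,\nu},\Gamma_{z'}}_G$, then applies Mackey's theorem to $\Res_U^G\bigl(\Ind_{ZU}^G(\nu\otimes\phi_z)\bigr)$ over the double cosets $UZ\backslash G/U$; the decisive point is that ${}^r(UZ)\cap U={}^rU\cap U$, because an element ${}^r(uz)$ with $z\in Z$ central semisimple and $u\in U$ unipotent has Jordan decomposition with semisimple part $z$, hence lies in $U$ only if $z=1$. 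Consequently the restriction of ${}^r(\nu\otimes\phi_z)$ to that intersection is just ${}^r\phi_z$, and each double-coset contribution $\cyc{\Ind_{{}^rU\cap U}^U({}^r\phi_z),\phi_{z'}}_U$ is literally $\nu$-free: your worry that the $Z$-component of $g^{-1}ug$ would reintroduce $\nu$ cosetwise is unfounded, and the load-bearing fact there is the very same Jordan-decomposition rigidity you invoke, only applied to arbitrary unipotent elements rather than regular ones. Comparing the two approaches: the paper's proof is more elementary (Clifford theory plus Mackey, no duality and no support statement needed), whereas your route through $D_{\Galg}$, the projections $\pi_\nu$, and Equation~(\ref{eq:dg}) requires the heavier inputs that $D_{\Galg}$ is an isometry commuting with the $Z$-isotypic projections --- which is true, is essentially what the paper records in Remark~\ref{rk:dualite}, and holds because $\Zz(\Galg)^F$ lies in every standard Levi, so Harish-Chandra induction and restriction commute with translation by $Z$ --- and that $D_{\Galg}(\Gamma_z)$ is supported on regular unipotent classes. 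In exchange, your computation yields the stronger identity $\cyc{\Gamma_{z,\nu},\Gamma_{z',\nu}}_G=\frac{1}{|Z|}\sum_{z_1}c_{z,z_1}\overline{c_{z',z_1}}\cyc{\gamma_{z_1},\gamma_{z_1}}_G=\frac{1}{|Z|}\cyc{\Gamma_z,\Gamma_{z'}}_G$ in one stroke, i.e.\ you obtain Corollary~\ref{nbnu} directly, whereas the paper deduces it afterwards from Lemma~\ref{ind} together with Lemma~\ref{centt}.
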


\begin{proof}
We have to show that the scalar product
$\cyc{\Gamma_{z,\nu},\Gamma_{z',\nu}}_G$ does not depend on $\nu$.
First remark that it follows from Lemma~\ref{centt} that
$$\cyc{\Gamma_{z,\nu},\Gamma_{z',\nu}}_G=\cyc{\Gamma_{z,\nu},\Gamma_{z'}}_G.$$
Denote by $R$ a set of representatives of the double cosets
$UZ\backslash G/U$. Then Mackey's theorem implies
\begin{eqnarray*}
\cyc{\Gamma_{z,\nu},\Gamma_{z'}}_G&=&\cyc{\Res_U^G\left(\Ind_{ZU}^G(\nu\otimes\phi_z)\right)
,\phi_{z'}}_U\\
&=&\sum_{r\in R}\cyc{ \Ind_{^r(UZ)\cap U}^U\left(\Res_{^r(UZ)\cap
U}({}^r(\nu\otimes \phi_z)\right),\phi_{z'}}_U\\
&=&\sum_{r\in R}\cyc{\Ind_{^rU\cap U}^U(^r\phi_z),\phi_{z'}}_U.
\end{eqnarray*}
Note that the scalar product in the last equality does not depend on
$\nu$. This proves the claim.
\end{proof}

\begin{corollary}\label{nbnu}
With the above notation, for $z,\,z'\in H^1(F,\cal Z(\Galg))$ and $\nu
\in \Irr(Z)$, we have
$$\cyc{\Gamma_{z,\nu},\Gamma_{z',\nu}}_G=\frac{1}{|Z|}\cyc{\Gamma_z,\Gamma_{z'}}_G.$$
\end{corollary}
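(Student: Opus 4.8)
The plan is to prove Corollary~\ref{nbnu} by summing the identity of Lemma~\ref{ind} over all characters $\nu\in\Irr(Z)$. The key structural fact I would exploit is the decomposition $\Gamma_{z'}=\sum_{\nu\in\Irr(Z)}\Gamma_{z',\nu}$, which was established immediately before Lemma~\ref{centt}. Using bilinearity of the scalar product, I would write
\begin{equation*}
\cyc{\Gamma_{z,\nu},\Gamma_{z'}}_G=\sum_{\nu'\in\Irr(Z)}\cyc{\Gamma_{z,\nu},\Gamma_{z',\nu'}}_G.
\end{equation*}
By Lemma~\ref{centt}, the constituents of $\Gamma_{z,\nu}$ all restrict to $Z$ as multiples of $\nu$, while those of $\Gamma_{z',\nu'}$ restrict as multiples of $\nu'$; hence the summand $\cyc{\Gamma_{z,\nu},\Gamma_{z',\nu'}}_G$ vanishes whenever $\nu'\neq\nu$. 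This collapses the sum to the single term $\cyc{\Gamma_{z,\nu},\Gamma_{z',\nu}}_G$, which is exactly $\cyc{\Gamma_{z,\nu},\Gamma_{z'}}_G$ as already noted in the proof of Lemma~\ref{ind}.

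Next I would bring in the full Gelfand-Graev character on the left as well. Applying the same decomposition $\Gamma_z=\sum_{\nu'\in\Irr(Z)}\Gamma_{z,\nu'}$ and the orthogonality of central restrictions gives
\begin{equation*}
\cyc{\Gamma_z,\Gamma_{z'}}_G=\sum_{\nu'\in\Irr(Z)}\cyc{\Gamma_{z,\nu'},\Gamma_{z'}}_G.
\end{equation*}
Now Lemma~\ref{ind} asserts that each term $\cyc{\Gamma_{z,\nu'},\Gamma_{z'}}_G=\cyc{\Gamma_{z,\nu'},\Gamma_{z',\nu'}}_G$ is \emph{independent} of $\nu'$, so every summand equals the common value $\cyc{\Gamma_{z,\nu},\Gamma_{z',\nu}}_G$. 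Since $\Irr(Z)$ has exactly $|Z|$ elements, the sum equals $|Z|\,\cyc{\Gamma_{z,\nu},\Gamma_{z',\nu}}_G$, and dividing by $|Z|$ yields the claimed formula.

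I do not anticipate a genuine obstacle here: the corollary is essentially a counting/averaging consequence of Lemma~\ref{ind} together with the $Z$-orthogonality of Lemma~\ref{centt}. The one point requiring mild care is the passage from $\cyc{\Gamma_{z,\nu},\Gamma_{z'}}_G$ to $\cyc{\Gamma_{z,\nu},\Gamma_{z',\nu}}_G$; this is exactly the reduction already recorded at the start of the proof of Lemma~\ref{ind}, and it follows cleanly because the constituents of $\Gamma_{z',\nu'}$ for $\nu'\neq\nu$ are orthogonal on $Z$ to those of $\Gamma_{z,\nu}$. With that observation in hand the proof is a two-line averaging argument, and I would present it as such.
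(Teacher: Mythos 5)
Your proof is correct and takes essentially the same route as the paper: both decompose $\cyc{\Gamma_z,\Gamma_{z'}}_G$ into the diagonal terms $\sum_{\nu'\in\Irr(Z)}\cyc{\Gamma_{z,\nu'},\Gamma_{z',\nu'}}_G$ using the orthogonality of central restrictions from Lemma~\ref{centt}, then apply Lemma~\ref{ind} to conclude that all $|Z|$ summands coincide. Your organization of the double sum in two stages (first collapsing $\cyc{\Gamma_{z,\nu},\Gamma_{z'}}_G$, then summing over $\nu'$) is only a cosmetic rearrangement of the paper's single double-sum computation.
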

\begin{proof}
We have
$$\cyc{\Gamma_z,\Gamma_{z'}}_G=\sum_{\nu,\,\nu'\in\Irr(Z)}\cyc{\Gamma_{z,\nu},\Gamma_{z',\nu'}}_G.$$
If $\nu\neq\nu'$, we have $\cyc{\Gamma_{z,\nu},\Gamma_{z',\nu'}}_G=0$
because by Lemma~\ref{centt}, the constituents of $\Gamma_{z,\nu}$
(resp. of $\Gamma_{z',\nu'}$) are constituents of $\Ind_Z^G(\nu)$
(resp. $\Ind_Z^G(\nu')$) and the characters $\Ind_Z^G(\nu)$ and
$\Ind_Z^G(\nu')$ have no constituents in common. Then
$$\cyc{\Gamma_z,\Gamma_{z'}}_G=\sum_{\nu\in\Irr(Z)}\cyc{\Gamma_{z,\nu},\Gamma_{z',\nu}}_G.$$
The result is now a consequence of Lemma~\ref{ind}
\end{proof}

\begin{proposition}
\label{centreconnexerelative}
With the above notation, if $p$ is a good prime for $\Galg$ and
the center of $\Galg$ is connected, then for every linear character
$\nu$ of $\Zz(\Galg^F)$, one has
$$|\Irr_{s}(\Galg^F|\nu)|=\frac{1}{|\Zz(\Galg^F)|}|\Irr_{s}(\Galg^F)|.$$
\end{proposition}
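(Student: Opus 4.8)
The plan is to exploit that a connected centre forces $H^1(F,\cal Z(\Galg))$ to be trivial, so that there is a single Gelfand-Graev character $\Gamma_1$ and the semisimple characters of $G=\Galg^F$ are exactly the constituents of $D_{\Galg}(\Gamma_1)$; in the notation of Remark~\ref{rk:dualite} this reads $\Irr_s(\Galg^F)=F_1$. By Remark~\ref{rk:dualite}, together with Lemma~\ref{centt}, the semisimple characters lying over a fixed $\nu\in\Irr(Z)$ are precisely the elements of $F_{1,\nu}$, the set of constituents of $D_{\Galg}(\Gamma_{1,\nu})$. Hence the first step is simply to record the identification $|\Irr_s(\Galg^F|\nu)|=|F_{1,\nu}|$, which turns the statement into a counting problem for the constituents of $D_{\Galg}(\Gamma_{1,\nu})$.

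Next I would convert this count into a scalar product. Since $\Gamma_1$ is multiplicity free and $\Gamma_1=\sum_{\nu'\in\Irr(Z)}\Gamma_{1,\nu'}$ with the summands having pairwise disjoint constituents by Lemma~\ref{centt}, each $\Gamma_{1,\nu}$ is itself multiplicity free. As $D_{\Galg}$ is an isometry sending distinct irreducible characters to distinct irreducible characters up to sign, the number of constituents of $D_{\Galg}(\Gamma_{1,\nu})$ equals its norm, so $|F_{1,\nu}|=\cyc{D_{\Galg}(\Gamma_{1,\nu}),D_{\Galg}(\Gamma_{1,\nu})}_G=\cyc{\Gamma_{1,\nu},\Gamma_{1,\nu}}_G$. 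Applying Corollary~\ref{nbnu} with $z=z'=1$ gives $\cyc{\Gamma_{1,\nu},\Gamma_{1,\nu}}_G=\tfrac{1}{|Z|}\cyc{\Gamma_1,\Gamma_1}_G$, and the multiplicity-freeness of $\Gamma_1=\sum_{s\in\cal S}\chi_{s,1}$ yields $\cyc{\Gamma_1,\Gamma_1}_G=|\cal S|=|\Irr_s(\Galg^F)|$ through the bijection $s\mapsto\rho_{s,1}$.

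Combining these identities I obtain $|\Irr_s(\Galg^F|\nu)|=\tfrac{1}{|Z|}|\Irr_s(\Galg^F)|$, and it remains only to note $|Z|=|\Zz(\Galg^F)|$, which holds since $\Zz(\Galg)$ is connected so that $Z=\Zz(\Galg)^F=\Zz(\Galg^F)$. The main obstacle I anticipate is the middle step: one must check that passing through the duality $D_{\Galg}$ genuinely preserves the count, that is, that both $\Gamma_{1,\nu}$ and its dual are multiplicity free and that $D_{\Galg}$ induces a bijection on constituents compatible with lying over $\nu$. This is exactly what Remark~\ref{rk:dualite} provides, using $D_{\Galg}(\Ind_Z^G(\nu))=\Ind_Z^G(\nu)$, so the delicate point is to invoke it correctly rather than to prove anything new; the remaining computations are the formal manipulations above.
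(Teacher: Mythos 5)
Your proof is correct and takes essentially the same route as the paper: both arguments identify $|\Irr_{s}(\Galg^F|\nu)|$ with $\cyc{\Gamma_{1,\nu},\Gamma_{1,\nu}}_{\Galg^F}$ via Remark~\ref{rk:dualite}, note $|\Irr_{s}(\Galg^F)|=\cyc{\Gamma_1,\Gamma_1}_{\Galg^F}$, and conclude with Corollary~\ref{nbnu}. The details you make explicit (multiplicity-freeness of each $\Gamma_{1,\nu}$, the isometry property of $D_{\Galg}$, and the bijection $s\mapsto\rho_{s,1}$ with the semisimple classes) are precisely what the paper's terse proof leaves implicit.
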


\begin{proof}
Since the center of $\Galg$ is connected, there is only one
Gelfand-Graev character $\Gamma_1$. Moreover, Remark~\ref{rk:dualite} implies 
$$|\Irr_{s}(\Galg^F|\nu)|=\cyc{\Gamma_{1,\nu},\Gamma_{1,\nu}}_{\Galg^F}.$$
Furthermore, one has
$|\Irr_{s}(\Galg^F)|=\cyc{\Gamma_1,\Gamma_1}_{\Galg^F}$. The result
follows from Lemma~\ref{nbnu}
\end{proof}

\begin{proposition}\label{relativeprime}
With the above notation, if $p$ is a good prime for $\Galg$ and
the group $H^1(F,\cal Z(\Galg))$ has prime order $\ell$, then for every linear
character $\nu$ of $\Zz(\Galg^F)$, one has
$$|\Irr_{s}(\Galg^F|\nu)|=\frac{1}{|\Zz(\Galg^F)|}|\Irr_{s}(\Galg^F)|.$$
\end{proposition}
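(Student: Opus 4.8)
The plan is to prove that the integer $|\Irr_{s}(\Galg^F|\nu)|$ does not depend on the linear character $\nu$ of $Z=\Zz(\Galg)^F$; once this is known the statement follows immediately, because every semisimple character of $\Galg^F$ restricts to $Z$ as a multiple of a single linear character, so that $\sum_{\nu\in\Irr(Z)}|\Irr_{s}(\Galg^F|\nu)|=|\Irr_{s}(\Galg^F)|$ and all $|Z|$ summands coincide. The argument refines the proof of Proposition~\ref{centreconnexerelative}: there is now one Gelfand-Graev character $\Gamma_z$ for each $z\in H^1(F,\cal Z(\Galg))$, and a given semisimple character may occur among the constituents $F_z$ of $D_{\Galg}(\Gamma_z)$ for several values of $z$, so the counting must keep track of these multiplicities.

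For a semisimple character $\chi$ of $\Galg^F$, set $m(\chi)=|\{z\in H^1(F,\cal Z(\Galg))\,|\,\chi\in F_z\}|$. As in the proof of Proposition~\ref{nbsschar}, the order $|A_{\Galg^*}(s)^{F^*}|$ divides $\ell=|H^1(F,\cal Z(\Galg))|$ and is therefore $1$ or $\ell$, whence $\cal T=\cal T_1\sqcup\cal T_{\ell}$. Tracing the construction of the $\rho_{s,z}$ through $\Res_{\Galg^F}^{\widetilde{\Galg}^F}(\rho_{\widetilde{s}})$, a semisimple character attached to a class in $\cal T_1$ is independent of $z$ and so lies in every $F_z$, giving $m(\chi)=\ell$; for a class in $\cal T_{\ell}$ the $\ell^2$ characters $\rho_{s,z}$ are pairwise distinct, so each lies in exactly one $F_z$ and $m(\chi)=1$. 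Writing $a(\nu)$ (resp.\ $b(\nu)$) for the number of semisimple characters lying over $\nu$ with $m(\chi)=\ell$ (resp.\ $m(\chi)=1$), we have $|\Irr_{s}(\Galg^F|\nu)|=a(\nu)+b(\nu)$.

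Next I would compute two quantities that are manifestly independent of $\nu$. By Remark~\ref{rk:dualite} the elements of $F_{z,\nu}$ are the constituents of $D_{\Galg}(\Gamma_{z,\nu})$, and since $\Gamma_z$ is multiplicity free and $D_{\Galg}$ is an isometry one gets $\cyc{\Gamma_{z,\nu},\Gamma_{z,\nu}}_{\Galg^F}=|F_{z,\nu}|$. Summing over $z$ and grouping the semisimple characters over $\nu$ according to $m(\chi)$ gives
$$N_1:=\sum_{z\in H^1(F,\cal Z(\Galg))}\cyc{\Gamma_{z,\nu},\Gamma_{z,\nu}}_{\Galg^F}=\sum_{\chi}m(\chi)=\ell\,a(\nu)+b(\nu),$$
where the middle sum runs over the semisimple characters lying over $\nu$. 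Likewise, expanding $\cyc{\sum_z\Gamma_{z,\nu},\sum_{z'}\Gamma_{z',\nu}}_{\Galg^F}$ by bilinearity and applying the isometry $D_{\Galg}$ produces $\sum_{\chi}m(\chi)^2$, so that
$$N_2:=\sum_{z,z'\in H^1(F,\cal Z(\Galg))}\cyc{\Gamma_{z,\nu},\Gamma_{z',\nu}}_{\Galg^F}=\ell^2\,a(\nu)+b(\nu).$$
By Corollary~\ref{nbnu} one has $N_1=\tfrac{1}{|Z|}\sum_z\cyc{\Gamma_z,\Gamma_z}_{\Galg^F}$ and $N_2=\tfrac{1}{|Z|}\sum_{z,z'}\cyc{\Gamma_z,\Gamma_{z'}}_{\Galg^F}$, so both $N_1$ and $N_2$ are independent of $\nu$.

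Finally, since only the two multiplicities $1$ and $\ell$ occur (this is where the primality of $\ell$ is used), these two moments determine $a(\nu)$ and $b(\nu)$: solving gives $a(\nu)=(N_2-N_1)/(\ell^2-\ell)$ and $b(\nu)=N_1-\ell\,a(\nu)$, both independent of $\nu$. Hence $|\Irr_{s}(\Galg^F|\nu)|=a(\nu)+b(\nu)$ is independent of $\nu$, and the averaging remark above yields the claimed value $\tfrac{1}{|Z|}|\Irr_{s}(\Galg^F)|$. I expect the main obstacle to be the middle step, namely justifying that $m(\chi)\in\{1,\ell\}$ and matching the two values to $\cal T_1$ and $\cal T_{\ell}$ by controlling precisely how the constituents of $\Res_{\Galg^F}^{\widetilde{\Galg}^F}(\rho_{\widetilde{s}})$ distribute among the $F_z$; once this two-valued behaviour is secured, the method of moments closes the argument mechanically.
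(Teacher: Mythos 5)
Your proposal is correct, and it rests on the same two pillars as the paper's proof: the dichotomy that a semisimple character over $\nu$ lies either in exactly one set $F_{z,\nu}$ or in all $\ell$ of them (which the paper derives from a surjective morphism $\omega_s\colon H^1(F,\mathcal{Z}(\Galg))\to\Irr(A_{\Galg^*}(s)^{F^*})$ controlling which duals of Gelfand--Graev characters contain $\rho_{s,\vartheta}$, together with primality of $\ell$), and Corollary~\ref{nbnu}, which converts the relevant scalar products $\cyc{\Gamma_{z,\nu},\Gamma_{z',\nu}}_{\Galg^F}$ into $\nu$-independent quantities $\frac{1}{|\Zz(\Galg^F)|}\cyc{\Gamma_z,\Gamma_{z'}}_{\Galg^F}$. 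Where you diverge is the endgame. The paper writes $\Irr_s(\Galg^F|\nu)=\bigcup_z F_{z,\nu}$, applies inclusion--exclusion (all higher intersections equal $F_\nu$) to get $\sum_z|F_{z,\nu}|+(1-\ell)|F_\nu|$, then substitutes the \emph{explicit} values of $\cyc{\Gamma_z,\Gamma_z}$ and $\cyc{\Gamma_z,\Gamma_{z'}}$ from Proposition~\ref{norm} and matches the outcome against the closed formula of Proposition~\ref{semiprime}. Your method of moments ($N_1=\ell a+b$, $N_2=\ell^2 a+b$, solvable since only the multiplicities $1$ and $\ell$ occur) together with the averaging identity $\sum_\nu|\Irr_s(\Galg^F|\nu)|=|\Irr_s(\Galg^F)|$ reaches the stated equality \emph{formally}, without ever invoking Proposition~\ref{norm} or Proposition~\ref{semiprime}; this is linear-algebraically equivalent information (your $N_1$, $N_2$ encode the same two unknowns $\sum_z|F_{z,\nu}|$ and $|F_\nu|$), but your route is more self-contained, needs no cuspidal-Levi rank data, and incidentally sidesteps the sign slips in the paper's displayed values of $\cyc{\Gamma_z,\Gamma_z}$ and the final computation. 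One caveat: the step you yourself flag --- that $m(\chi)\in\{1,\ell\}$ with $m=\ell$ exactly for classes in $\mathcal{T}_1$ --- is asserted rather than proved in your write-up, but the paper does no more: it states the existence of $\omega_s$ with a pointer to the Digne--Michel theory around~\cite[14.49]{DM}, so you are at the same level of rigor there; your bookkeeping also tacitly uses that $\Gamma_{z,\nu}$ is multiplicity free and that the sign of $D_{\Galg}$ on an irreducible depends only on that irreducible, both of which are available in the paper's setup.
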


\begin{proof}
We consider $\widetilde{\Galg}$ a connected reductive group with
connected center as in the proof of Proposition~\ref{nbsschar}.
Fix $s$ a semisimple element of $\Galg^{*F^*}$ and $\widetilde{s}$ a
semisimple element of $\widetilde{\Galg}^{*F^*}$ such that
$i^*(\widetilde{s})=s$.
In the proof Proposition~\ref{nbsschar}, we have seen that 
$\Res_{\Galg^F}^{\widetilde{\Galg}^F}(\rho_{\widetilde{s}})$ has
$|A_{\Galg^*}(s)^{F^*}|$ constituents. In fact, the
constituents of
$\Res_{\Galg^F}^{\widetilde{\Galg}^F}(\rho_{\widetilde{s}})$ are in
bijection with $\Irr(A_{\Galg^*}(s)^{F^*})$.
We denote by $\rho_{s,\vartheta}$ the
constituent corresponding to $\vartheta\in\Irr(A_{\Galg^*}(s)^{F^*})$.
Moreover, this
bijection could be chosen such that there is a surjective morphism $\omega_s:
H^1(F,\cal Z(\Galg))\rightarrow \Irr(A_{\Galg^*}(s)^{F^*})$ satisfying
$\rho_{s,\vartheta}$ (for $\vartheta\in \Irr(A_{\Galg^*}(s)^{F^*})$) is a constituent of
$D_{\Galg}(\Gamma_{z})$ for $z\in H^1(F,\cal Z(\Galg))$ if and only if
$\omega_s(z)=\vartheta$. In particular, the character $\rho_{s,\vartheta}$ lies in
$|H^1(F,\cal Z(\Galg))|/|A_{\Galg^*}(s)^{F^*}|$ different duals of
Gelfand-Graev characters of $\Galg^F$.
Furthermore, $H^1(F,\cal Z(\Galg))$ has prime order $\ell$. 
It follows that a semisimple character of $\Galg^F$ is either a constituent
of only one $D(\Gamma_z)$ or of all.
We keep the notation of Remark~\ref{rk:dualite} and put, 
for $\nu\in\Irr(\Zz(\Galg^F))$
$$F_{\nu}=\bigcap_{z\in H^1(F,\cal Z(\Galg))} F_{z,\nu}.$$
The above discussion
implies that if $z\neq z'$, then
\begin{equation}
F_{z,\nu}\cap F_{z',\nu}=F_{\nu}.
\label{eq:internu}
\end{equation}
Moreover, one has
$$\Irr_{p'}(\Galg^F|\nu)=\bigcup_{z\in H^1(F,\cal Z(\Galg))}
F_{z,\nu}.$$
Therefore, 
\begin{eqnarray*}
|\Irr_{p'}(\Galg^F|\nu)|&=&|\bigcup_{z\in H^1(F,\cal
Z(\Galg))} F_{z,\nu}|\\
&=&\sum_{k=1}^{\ell}(-1)^{k+1}\sum_{I\subseteq H^1(F,\cal Z(\Galg)),
|I|=k}|\bigcap_{z\in I} F_{z,\nu}|\\
&=&\sum_{z}|F_{z,\nu}|+|F_{\nu}|
\sum_{k=2}^{\ell}(-1)^{k+1}\sum_{I\subseteq H^1(F,\cal Z(\Galg)),
|I|=k} 1\\
&=&\sum_{z}|F_{z,\nu}|+|F_{\nu}|
\sum_{k=2}^{\ell}(-1)^{k+1}\left(
\!\!
\begin{array}{c}
\ell\\
k
\end{array}\!\!\right)\\
&=&\sum_{z}|F_{z,\nu}|+|F_{\nu}|
(1-\ell).
\end{eqnarray*}
Note that, since the characters $\Gamma_{z,\nu}$ are multiplicity
free, one has
$|F_{z,\nu}|=\cyc{\Gamma_{z,\nu},\Gamma_{z,\nu}}_{\Galg^F}$ and
$|F_{\nu}|=\cyc{\Gamma_{z,\nu},\Gamma_{z',\nu}}_{\Galg^F}$ where $z$
and $z'$ are two fixed distinct elements of $H^1(F,\cal Z(\Galg))$.
Fix two such elements $z$ and $z'$.
Then Corollary~\ref{nbnu} implies
$$|F_{z,\nu}|=\frac{1}{|\Zz(\Galg^F)|}\cyc{\Gamma_z,\Gamma_z}_{\Galg^F}\quad\textrm{and}\quad
|F_{\nu}|=\frac{1}{|\Zz(\Galg^F)|}\cyc{\Gamma_z,\Gamma_{z'}}_{\Galg^F}.$$
Denote by $\Lalg$ the cuspidal Levi subgroup associated to every
non-trivial character of $H^1(F,\cal Z(\Galg))$ and by $l$ the
semisimple rank of $\Galg$. Proposition~\ref{norm}
gives
$$\cyc{\Gamma_z,\Gamma_z}=|Z^{\circ}|\left(q^l-(\ell-1)q^{l-(\operatorname{ss-rk}(\Lalg))}\right)
\textrm{ and }
\cyc{\Gamma_z,\Gamma_{z'}}=|Z^{\circ}|\left(q^l-q^{l-(\operatorname{ss-rk}(\Lalg))}\right),$$
with $Z^{\circ}=\Zz(\Galg)^{\circ F}$. It follows 
\begin{eqnarray*}
|\Irr_{s}(\Galg^F|\nu)|&=&\frac{1}{|\Zz(\Galg^F)|}|Z^{\circ}|\left(
q^l-(\ell^2 -1)q^{l-(\operatorname{ss-rk}(\Lalg))}
\right)\\
&=&\frac{1}{|\Zz(\Galg^F)|}|\Irr_{s}(\Galg^F)|.
\end{eqnarray*}
The last equality comes from Proposition~\ref{semiprime}.
\end{proof}

\begin{remark}\label{rk:last}
As we remark in~\cite{BrHi}, the number $|\Irr_{p'}(\Balg^F|\nu)|$ does
not depend on $\nu$ for all $\nu\in\Zz(\Galg^F)$ and
$$|\Irr_{p'}(\Balg^F|\nu)|=\frac{1}{|\Zz(\Galg^F)|}|\Irr_{p'}(\Balg^F)|.$$ 
Suppose now that $p$ is a good prime for $\Galg$ and $H^1(F,\cal
Z(\Galg))$ has prime order. Then, thanks to Remark~\ref{rk:mkdef} and
Proposition~\ref{relativeprime}, we deduce
$$|\Irr_{p'}(\Balg^F|\nu)|=|\Irr_{p'}(\Galg^F|\nu)|,$$ for every
$\nu\in\Irr(\Zz(\Galg^F))$. This proves Theorem~\ref{main}.
\end{remark}

\noindent\textbf{Acknowledgements.}\quad
Part of this work was done during the programme ``Algebraic Lie Theory'' in
Cambridge. I gratefully acknowledge financial support by the Isaac Newton
Institute.

I wish to sincerely thank Jean Michel for pointing me in this direction and for
valuable and clarifying discussions on the paper~\cite{DLM2}. I also
wish to thank Gunter Malle for his reading of the manuscript.

\bibliographystyle{plain}
\bibliography{references}

\end{document}